\title[Curvature restrictions for Levi-flats in $\C\PP^2$]
{Curvature restrictions for Levi-flat real hypersurfaces in complex projective planes}
\author{Masanori Adachi}
\address[M. Adachi]{Center~for~Geometry~and~its~Applications, Pohang~University~of~Science~and~Technology, Pohang 790-784, Republic of Korea
\& 
Graduate~School~of~Mathematics, Nagoya~University, Nagoya 464-8602, Japan}
\email{adachi@postech.ac.kr}
\author{Judith Brinkschulte}
\address[J. Brinkschulte]{Universit\"at Leipzig, Mathematisches Institut, PF 100920, D-04009 Leipzig, Germany}
\email{brinkschulte@math.uni-leipzig.de}
\subjclass[2010]{Primary~32V15, Secondary~32V40, 53B25, 53C12.}
\keywords{Levi-flat real hypersurface, totally real Ricci curvature, adjunction formula, integral formula.}
\date{\today}
\thanks{The first author is partially supported by 
an NRF grant 2011-0030044 (SRC-GAIA) of the Ministry of Education, the Republic of Korea, and 
a JSPS Grant-in-Aid for Young Scientists (B) 26800057.
}
\numberwithin{equation}{section}
\newcommand\C{\mathbb{C}}  
\newcommand\R{\mathbb{R}}
\newcommand\N{\mathbb{N}}
\newcommand\PP{\mathbb{P}}
\newcommand\Ric{\mathrm{Ric}}
\newcommand\Ker{\mathrm{Ker}}
\newcommand{\base}[1]{\frac{\pa}{\pa #1}}
\newcommand{\pa}{\partial}
\newcommand{\opa}{\overline\pa}
\newcommand{\ol}{\overline }
\newtheorem*{MainTheorem}{Main Theorem}
\newtheorem*{Theorem*}{Theorem}
\newtheorem{Theorem}{Theorem}[section]
\newtheorem{Proposition}[Theorem]{Proposition}
\newtheorem{Lemma}[Theorem]{Lemma}
\newtheorem{Corollary}[Theorem]{Corollary}
\theoremstyle{remark}
\newtheorem{Remark}[Theorem]{Remark}
\begin{document}

\maketitle

\begin{abstract} % at most 150 words
We study curvature restrictions of Levi-flat real hypersurfaces in complex projective planes, 
whose existence is in question. 
We focus on its totally real Ricci curvature, the Ricci curvature of the real hypersurface in the direction of the Reeb vector field, and show that it cannot be greater than $-4$ along a Levi-flat real hypersurface.
We rely on a finiteness theorem for the space of square integrable holomorphic 2-forms on the complement of the Levi-flat real hypersurface,
where the curvature plays the role of the size of the infinitesimal holonomy of its Levi foliation. 
\end{abstract}

\section{Introduction}

The past decades, the non-existence conjecture of a smooth closed Levi-flat real hypersurface in the complex projective spaces $\C\PP^n$ $(n \geq 2)$
has been intensively investigated by foliators, complex analysts, and differential geometers.
This conjecture first appeared in the papers \cite{CLS} and \cite{C} devoted to the study of minimal sets of holomorphic foliations on $\C\PP^n$ 
 and  has been affirmatively proved for $n > 2$ by Lins Neto \cite{L} in the real analytic case
and by Siu \cite{Si} in the smooth case. 
There have been papers that announced proofs of the non-existence for $n=2$, however, 
they all are considered to contain serious gaps (cf. \cite{IM}) and the case $n=2$ remains open.

The following partial result by Bejancu and Deshmukh uses a differential-geometric approach to restrict a certain curvature of the Levi-flat real hypersurface:

\begin{Theorem*}[\cite{BD}]
\label{bejancu-deshmukh}
Let $M$ be an oriented $\mathcal{C}^\infty$-smooth closed Levi-flat real hypersurface in $\C\PP^n$ $(n \geq 2)$
equipped with the Fubini--Study metric. 
Denote by $\nu$ the unit normal vector field of $M \subset \C\PP^n$ and set $\xi = -J\nu$, 
where $J$ denotes the complex structure of $\C\PP^n$. 
Then $\Ric^M(\xi, \xi)$ cannot be $\geq 0$ everywhere on $M$.
\end{Theorem*}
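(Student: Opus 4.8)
The approach I would take is to confront the \emph{pointwise} value of $\Ric^M(\xi,\xi)$, obtained from the Gauss equation, with its \emph{average}, obtained from a Bochner identity for the Reeb field, and then to exclude the rigid borderline case using the Codazzi equation; in fact this will give the stronger conclusion $\int_M \Ric^M(\xi,\xi)\,dV<0$, which contains the theorem. Write $c>0$ for the constant holomorphic sectional curvature of $\C\PP^n$, let $\mathcal D=TM\cap J(TM)$ be the Levi distribution, $S$ the shape operator of $M$ with respect to $\nu$, and split $TM=\mathcal D\oplus\R\xi$ orthogonally. Put $w:=$ the $\mathcal D$-component of $S\xi$ and $B:=\operatorname{pr}_{\mathcal D}\circ S|_{\mathcal D}$, a symmetric endomorphism of $\mathcal D$. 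The first observations are algebraic: Levi-flatness of $M$ is equivalent to $JB+BJ=0$ (this is precisely the vanishing of the Levi form), which forces $\operatorname{tr}B=0$ — equivalently the leaves of the Levi foliation, being complex submanifolds of the K\"ahler manifold $\C\PP^n$, are minimal — and, after a short computation from $\nabla^M_X\xi=(JSX)^{\top}$, also $\operatorname{div}^M\xi=0$.

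For the pointwise step, note that $\operatorname{span}\{e,\xi\}$ is a totally real $2$-plane for every unit $e\in\mathcal D$ (since $\langle Je,\xi\rangle=-\langle e,\nu\rangle=0$), hence has sectional curvature $c/4$ in $\C\PP^n$; inserting this and $\operatorname{tr}B=0$ into the Gauss equation gives $\Ric^M(\xi,\xi)=\tfrac{(n-1)c}{2}-|w|^2$ (and one checks $|w|^2=|\nabla^M_\xi\xi|^2$). For the averaged step I would apply the Weitzenb\"ock formula to $\eta=\xi^{\flat}$: since $\delta\eta=-\operatorname{div}^M\xi=0$, it reads $\int_M\Ric^M(\xi,\xi)\,dV=\int_M\big(|d\eta|^2-|\nabla^M\xi|^2\big)\,dV$, and a direct computation in the above splitting yields $|d\eta|^2=|w|^2$ and $|\nabla^M\xi|^2=|B|^2+|w|^2$. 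Therefore $\int_M\Ric^M(\xi,\xi)\,dV=-\int_M|B|^2\,dV\le0$.

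It remains to see the inequality is strict, i.e. $B\not\equiv0$. Suppose $B\equiv0$. Then $SY=\langle Y,w\rangle\xi$ for $Y\in\mathcal D$ and $\nabla^M_Y\xi=0$, so the Codazzi equation $(\nabla^M_\xi S)Y-(\nabla^M_Y S)\xi=-R^{\C\PP^n}(\xi,Y)\nu=\tfrac c4 JY$ for $Y\in\mathcal D$ expands, on its $\mathcal D$-component, to
\[
\langle Y,w\rangle Jw+\langle Y,Jw\rangle w-\nabla^M_Y w=\tfrac c4 JY\qquad(Y\in\mathcal D).
\]
Taking $Y=Jw$ gives $\nabla^M_{Jw}w=\big(|w|^2+\tfrac c4\big)w$, hence $\tfrac12(Jw)|w|^2=\langle\nabla^M_{Jw}w,w\rangle=\big(|w|^2+\tfrac c4\big)|w|^2$. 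At a point where $|w|^2$ attains its maximum on the compact $M$ the left-hand side vanishes, forcing $|w|^2=0$ there; thus $w\equiv0$, and then the displayed identity forces $\tfrac c4 JY=0$ for all $Y\in\mathcal D$, which is absurd (here $n\ge2$ enters). Hence $B\not\equiv0$, so $\int_M\Ric^M(\xi,\xi)\,dV<0$ and the theorem follows.

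The step I expect to be the real work is the averaged identity $\int_M\Ric^M(\xi,\xi)\,dV=-\int_M|B|^2\,dV$: one must run Weitzenb\"ock for $\eta$ and then carefully identify the analytic quantities $|d\eta|^2$, $|\nabla^M\xi|^2$ with the shape-operator data, since it is exactly this identity that both supplies the sign $\int_M\Ric^M(\xi,\xi)\,dV\le0$ and isolates the rigid case $B\equiv0$ killed by Codazzi. A secondary, purely bookkeeping hazard is keeping the Weingarten and curvature-tensor sign conventions consistent throughout.
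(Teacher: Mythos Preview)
The paper does not prove this theorem; it is quoted from \cite{BD} as a prior result which the Main Theorem then sharpens (for $n=2$, from the bound $0$ to $-4$). The paper's own argument is entirely complex-analytic and proceeds by contradiction on the stronger hypothesis $\Ric^M(\xi,\xi)>-4$: Proposition~\ref{local} converts this into a bound on the form $\alpha$, which via Griffiths' integral formula (Theorem~\ref{integral}) and Proposition~\ref{sect} forces the space $L^2_{0,0}(\Omega,\mathcal{O}(-3))\cap\Ker\opa$ on one side of $M$ to be finite-dimensional (Corollary~\ref{finite}); this contradicts the infinite-dimensionality supplied by Proposition~\ref{separate}.

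Your proposal is correct and is essentially the original differential-geometric argument of \cite{BD}: Gauss for the pointwise identity $\Ric^M(\xi,\xi)=\tfrac{(n-1)c}{2}-|w|^2$ (which, for $n=2$ and $c=4$, recovers the paper's Corollary to Proposition~\ref{local}), Bochner/Weitzenb\"ock applied to $\xi^\flat$ for the integrated identity $\int_M\Ric^M(\xi,\xi)=-\int_M|B|^2\le 0$, and Codazzi to exclude the rigid case $B\equiv 0$. This route works uniformly for all $n\ge 2$ and is elementary, whereas the paper's complex-analytic machinery is specific to surfaces but yields the much stronger bound $-4$; the two methods are thus genuinely different in character and in what they buy. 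One minor point of bookkeeping: in your Codazzi step $\nabla^M_Y w$ need not lie in $\mathcal D$, so the displayed identity should really feature the $\mathcal D$-component $(\nabla^M_Y w)_{\mathcal D}$; since the next line pairs against $w\in\mathcal D$, the conclusion $\tfrac12(Jw)|w|^2=(|w|^2+\tfrac c4)|w|^2$ is unaffected.
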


The Ricci curvature $\Ric^M(\xi, \xi)$ is referred to as \emph{the totally real Ricci curvature} of the real hypersurface $M$.
The aim of this paper is to improve the curvature restriction in the theorem of Bejancu and Deshmukh. Also, our proof is completely complex-analytic.

Our main theorem is stated as follows:
\begin{MainTheorem}
Let $M$ be an oriented $\mathcal{C}^2$-smooth closed Levi-flat real hypersurface in $\C\PP^2$ equipped with the Fubini--Study metric. 
Then, the totally real Ricci curvature $\Ric^M(\xi, \xi)$ cannot be $> -4$ everywhere on $M$. 
\end{MainTheorem}

\medskip

The idea of the proof is as follows. 
The key ingredient is a leafwise $(1,0)$-form $\alpha$ on the Levi-flat real hypersurface $M$,
which measures the size of the infinitesimal holonomy of the Levi foliation of $M$ (See \S\ref{subsect:alpha} for its definition). 
We will find out that the restriction on the totally real Ricci curvature is equivalent to an upper bound on the norm of $\alpha$ 
by observing an adjunction-type equality (Proposition \ref{local}).
Then, exploiting an integral formula (Theorem \ref{integral}) originating from a paper of Griffiths, 
we will prove the finite dimensionality of the space of $L^2$ holomorphic 2-forms on domains with Levi-flat boundary in $\C\PP^2$ (Corollary \ref{finite})
under the upper bound on the norm of $\alpha$. 
This finite dimensionality is however a contradiction, because the space should be infinite dimensional
(Proposition \ref{separate}).

\medskip

The organization of this paper is as follows. 
In \S\ref{sect:prelim}, we explain the basic notions and our conventions pertaining to the local geometry of Levi-flat real hypersurfaces. 
The form $\alpha$ is defined in this section.
The connection between the form $\alpha$ and the totally real Ricci curvature is explained in \S\ref{sect:adjunction} via Gauss' equation. 
In \S\ref{sect:integral}, we explain and prove some integral formula \`a la Griffiths. 
We exploit this formula in \S\ref{sect:finiteness} to study the finite dimensionality of the space of $L^2$ holomorphic sections of 
negative holomorphic line bundles $\mathcal{O}_{\C\PP^2}(-m)$ over domains with Levi-flat boundary in $\C\PP^2$. 
In \S\ref{sect:infiniteness}, we give a proof of the infinite dimensionality of the $L^2$ canonical sections on pseudoconvex domains in complex projective spaces and complete the proof of the Main theorem.
In Appendix \ref{sect:takeuchi}, we revisit Takeuchi's inequality from the viewpoint of \S\ref{sect:adjunction} and 
give a remark on a general restriction on the totally real Ricci curvature of Levi-flat real hypersurfaces in K\"ahler surfaces.

\subsection*{Acknowledgements}
The authors are grateful to K. Matsumoto for explaining her recent work to the first author and mentioning a result in \cite{O}, which leads us to Proposition \ref{takeuchi}.
We also thank A. Iordan for valuable discussions and the referee for suggestions improving the quality of the manuscript.

%%%%%%%%%%%%%%%%%%%%%%%%%%%%%%%%%%%%%%%%%%%%%%%%%%%%%%%%%%%%%%%%%%%%%%%%%%%%%%%%%%%%%%%
\section{Preliminaries for local arguments}
\label{sect:prelim}

In this section, we collect basic notions and our conventions 
pertaining to the local geometry of Levi-flat real hypersurfaces. 
We restrict ourselves to the case of K\"ahler surfaces. 

%%%%%%%%%%%%%%%%%%%%%%%%%%%%%%%%%%%%%%%%%%%%%%%%%%%%%%%%%%%%%%%%%%%%
\subsection{K\"ahler metric and the bisectional curvature}
Let $(X, J_X)$ be a complex surface. 
The complex structure $J_X: TX \to TX$ allows us to regard the real tangent bundle $TX$ as a $\C$-vector bundle. 
We identify $TX$ with the holomorphic tangent bundle $T^{1,0}X$ as $\C$-vector bundles by
\[
\base{x_j} \longmapsto \base{z_j} = \frac{1}{2}\left(\base{x_j} - i\base{y_j}\right)
\]
where $(z_1, z_2)$ denotes a local coordinate and we write $z_j = x_j + i y_j$. 

Let $g: TX \times TX \to \R$ be a $J_X$-invariant Riemannian metric of $X$. 
We consider its sequilinear extension on $\C \otimes TX \times \C \otimes TX$ and 
obtain a hermitian metric of $T^{1,0}X$, namely, 
\[
g\left(\base{z_j}, \base{z_k}\right) = \frac{1}{2}\left(g\left(\base{x_j}, \base{x_k}\right) + ig\left(\base{x_j}, \base{y_k}\right)\right).
\]
The metric $g$ is said to be K\"ahler if its fundamental form 
\[
\omega = 2i \sum_{j,k=1}^2 g\left(\base{z_j}, \base{z_k}\right) dz_j \wedge d\ol{z}_k
\]
is a closed form. Our volume form is $dV_\omega = \omega \wedge \omega/8$.

We denote by $\nabla$ the Levi-Civita connection determined by the Riemannian metric $g$. 
It is well-known that $\nabla$ coincides with the canonical connection of the hermitian metric $g$.
Our convention of the curvature tensor of $\nabla$ is
\[
R(v_1, v_2) v_3 := \nabla_{v_1} (\nabla_{\widetilde{v_2}} \widetilde{v_3}) - \nabla_{v_2} (\nabla_{\widetilde{v_1}} \widetilde{v_3}) - \nabla_{[\widetilde{v_1}, \widetilde{v_2}]} \widetilde{v_3}
\]
for $v_j \in T_p X$ where $\widetilde{v_j}$ are extensions of $v_j$ to vector fields respectively. 
Given two $J_X$-invariant planes $\sigma_1, \sigma_2 \subset T_pX$,
we define the bisectional curvature by 
\begin{align*}
H(\sigma_1, \sigma_2) & := g(R(v_1, Jv_1)Jv_2, v_2) \\
&= g(R(v_1, v_2)v_2, v_1) + g(R(v_1, Jv_2)Jv_2, v_1)
\end{align*}
where $v_j$ are unit vectors in $\sigma_j$ respectively.

Our main example is the complex projective plane $X = \C\PP^2$ endowed with the Fubini--Study metric 
$g = g_{FS}$, whose fundamental form is given by 
\[
\omega_{FS} = i \pa\opa \log (1 + |z_1|^2 + |z_2|^2)
\]
in non-homogeneous coordinate system $(z_1, z_2)$. 
Note that $\omega_{FS}$ can be regarded as the Chern curvature $i\Theta(\mathcal{O}(1))$ of the hyperplane line bundle $\mathcal{O}(1)$
with the hermitian metric induced from the standard Euclidean metric of $\C^3$.
In our conventions, the bisectional curvature of $(\C\PP^2, \omega_{FS})$ is given by (cf. \cite{GK}) 
\[
H(\sigma_1, \sigma_2) = 2(1 + g(v_1, v_2)^2 + g(v_1, Jv_2)^2).
\]

%%%%%%%%%%%%%%%%%%%%%%%%%%%%%%%%%%%%%%%%%%%%%%%%%%%%%%%%%%%%%%%%%%%%
\subsection{Real hypersurfaces and their holomorphic normal bundle}
\label{subsect:normal}
Let $M$ be an oriented $\mathcal{C}^2$-smooth closed real hypersurface without boundary in $X$. 
A \emph{defining function} $\rho$ of $M$ is a $\mathcal{C}^2$-smooth real-valued function defined on a neighborhood $U$ of $M$ 
expressing $M = \lbrace z \in U \mid \rho(z) = 0 \rbrace$ as the preimage of a regular value $0$.  
We always assume that $M$ is oriented as the boundary of $\{ z \in U \mid \rho(z) < 0 \}$ by reversing the sign of $\rho$ if necessary.
Later in \S\ref{sect:adjunction}, \S\ref{sect:finiteness} and Appendix \ref{sect:takeuchi} 
we will choose $\rho$ as the signed boundary distance function to $M$ with respect to the Riemannian metric $g$.

Along the real hypersurface $M$, we consider two smooth $J_X$-invariant plane fields: 
$\sigma_T := TM \cap J_X TM$, the unique $J_X$-invariant subbundle of $TM$, 
and $\sigma_N$, the orthonormal complement of $\sigma_T$ in $TX$ with respect to $g$. 
Via the $\C$-vector bundle isomorphism $TX \simeq T^{1,0}X$, 
we identify $\sigma_T$ with the holomorphic tangent bundle of $M$, $T^{1,0}M := \Ker \pa\rho \subset T^{1,0}X|M$, 
and $\sigma_N$ with the holomorphic normal bundle of $M$, $N^{1,0}_M := (T^{1,0} X|M)/T^{1,0}M$. 

We fix a global orthonormal frame $\{ \xi, \nu \}$ of $\sigma_N$ with respect to $g$, 
so that $\nu$ is an outward normal vector of $M \subset X$ and $\nu = J\xi$. 
The vector field $\xi$, tangent to $M$, is often referred to as a Reeb vector field.
We write the normal derivative $h_\rho := \nu\rho: M \to \R_{>0}$. 
Note that $h^2_\rho$ defines a $\mathcal{C}^1$-smooth hermitian metric of $N^{1,0}_M$; 
we can measure the squared norm of a  vector $v =  a\xi + b\nu \in \sigma_N \simeq N^{1,0}_M$ by
$|v|^2_\rho := h^2_\rho |a + ib|^2$.

\begin{Remark}
We are working with a fixed K\"ahler metric for simplicity, though note that 
$N^{1,0}_M$ and the hermitian metric induced from $h^2_\rho$ are independent of the choice of the hermitian metric $g$
whereas $\sigma_N$ and $\{\xi, \nu\}$ depend on $g$. 
\end{Remark}

%%%%%%%%%%%%%%%%%%%%%%%%%%%%%%%%%%%%%%%%%%%%%%%%%%%%%%%%%%%%%%%%%%%%
\subsection{Levi-flat and a distinguished parametrization}

The \emph{Levi-form} of $M$ (with respect to $\rho$) is the restriction of the quadratic form 
obtained by $i\pa\opa \rho$ to the holomorphic tangent bundle $T^{1,0}M$. 
The real hypersurface $M$ is said to be \emph{Levi-flat} if the Levi-form of $M$ vanishes identically on $M$. 
It is easy to see that this definition does not depend on the choice of $\rho$. 

It follows from Frobenius' theorem that $M$ is Levi-flat if and only if 
$M$ has a foliation by complex hypersurfaces of $X$. 
The foliation is called the \emph{Levi foliation} $\mathcal{F}$ of $M$. 
The typical example of Levi-flat real hypersurfaces is $\C \times \R \subset \C^2$,
where the Levi foliation is given by $\{ \C \times \{t\} \}_{t \in \R}$. 

It is well-known that any real-analytic Levi-flat real hypersurface is locally identified with this typical example. 
For non real-analytic Levi-flat real hypersurfaces, although its local structure is not unique, 
still a sort of normal coordinate system is available. 
Suppose $M$ is Levi-flat. 
By standard arguments (cf. \cite{A}), we can choose a holomorphic chart $(z_1, z_2)$ of $X$ 
and a local parametrization $\varphi$ of $\mathcal{F}$ around any point $p \in M$ so as to satisfy the following conditions:
\begin{enumerate}
\item The parametrization $\varphi(\zeta,t): \C \times \R \supset V \to \varphi(V) \subset M$ is 
a $\mathcal{C}^2$-smooth orientation-preserving diffeomorphism with $\varphi(0,0) = p$ and holomorphic in $z$.
(We always assume these conditions for parametrizations of Levi foliations.)
\item The parametrization $\varphi$ is in the form of $\varphi(\zeta, t) =  (\zeta, w(\zeta, t))$ in the coordinate system $(z_1, z_2)$.
\item The parametrization behaves along the leaf passing through $p$ in such a way that 
\[
w(\zeta, 0) \equiv 0 
\quad \text{and} \quad 
\frac{\pa w}{\pa t}(\zeta, 0) \equiv 1,
\]
namely,
\[
\varphi_*\left(\base{t}\right)_{(\zeta,0)} = \left(\base{x_2}\right)_{(\zeta, 0)}.
\]

\item The coordinate system $(z_1, z_2)$ is normalized at $p$ with respect to $g$,
namely,  its fundamental form $\omega$ satisfies $\omega = i (dz_1 \wedge d\ol{z_1} + dz_2 \wedge d\ol{z_2})$ at $p$. 
\end{enumerate}
We refer to such a parametrization $\varphi(\zeta,t)$ of $M$ in the coordinate system $(z_1, z_2)$ 
as a \emph{distinguished parametrization} around $p \in M$ in this paper. 

%%%%%%%%%%%%%%%%%%%%%%%%%%%%%%%%%%%%%%%%%%%%%%%%%%%%%%%%%%%%%%%%%%%%
\subsection{The form $\alpha$}
\label{subsect:alpha}

Let us define the key object of this paper. 

Suppose $M$ is Levi-flat and a $\mathcal{C}^1$-smooth hermitian metric $h^2$ of $N^{1,0}_M$ is given. 
Consider a parametrization of the Levi foliation $\mathcal{F}$, not necessarily a distinguished parametrization, 
say $\varphi(\zeta,t): \C \times \R \subset V \to M$. 
This parametrization gives us a local trivialization of $N^{1,0}_M$ over $\varphi(V)$ 
since $\base{t}$ induces a local section of $N^{1,0}_M$.
We denote by $h^2_\varphi$ the local weight function of $h^2$ in this local trivialization. 
Using these notations, we define a continuous leafwise $(1,0)$-form defined on $\varphi(V)$ by 
\[
\alpha := \frac{\pa \log h_\varphi}{\pa \zeta} d\zeta. 
\]
One easily sees that $\alpha$ is well-defined on $M$. 
The point is that $N^{1,0}_M$ becomes a leafwise flat line bundle, namely all the transition functions are leafwise constant 
if we equip $N^{1,0}_M$ with the local trivializations given by parametrizations of the Levi foliation.

\begin{Remark}
We can associate a transversal measure $\mu = hdt$ of $\mathcal{F}$ from the hermitian metric $h^2$ of $N^{1,0}_M$. 
The form $\alpha$ measures the infinitesimal holonomy with respect to this transversal measure $\mu$.
In particular, $\mu$ is holonomy invariant measure if and only if $\alpha \equiv 0$. 
The form  $\alpha$ is essentially the modular form of $\mu$ in the context of foliation, 
which is useful in the study of the $\opa$-Neumann problem on weakly pseudoconvex domains (cf. \cite{St}).
\end{Remark}

The following Lemma will be used in \S\ref{sect:finiteness}. 
We can describe $\alpha$ in terms of defining function of $M$ 
when the hermitian metric of $N^{1,0}_M$ is induced from a defining function of $M$.

\begin{Lemma}
\label{alpha}
Suppose $M$ is Levi-flat. Let $\rho$ be a defining function of $M$ and consider $\alpha$ induced from $h_\rho^2$.
Then, $\alpha$ is characterized as a continuous leafwise $(1,0)$-form on $M$ satisfying
\[
\pa\opa \rho = \alpha \wedge \opa\rho + \pa\rho \wedge \ol{\alpha} \pmod{\mathcal{C}^0(M) \pa\rho \wedge \opa\rho}.
\]
\end{Lemma}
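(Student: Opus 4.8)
The plan is to establish both assertions --- that $\alpha$ satisfies the displayed congruence, and that it is the only continuous leafwise $(1,0)$-form doing so --- by pulling everything back along a parametrization of the Levi foliation. Fix such a parametrization $\varphi(\zeta,t)=(z_1(\zeta,t),z_2(\zeta,t))\colon V\to M$ of $\mathcal{F}$ inside a holomorphic chart of $X$. Because each leaf $\varphi(\cdot,t)$ is holomorphic, $\varphi^*dz_j$ carries no $d\ol\zeta$; and the $d\zeta$-coefficient of $\varphi^*\pa\rho$ equals $\pa_\zeta(\rho\circ\varphi)=0$, so $\varphi^*\pa\rho=P\,dt$ with $P:=\sum_j(\rho_{z_j}\circ\varphi)\,\pa_t z_j=\langle\pa\rho,\varphi_*\base{t}\rangle$. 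Since $\varphi$ maps into $M=\{\rho=0\}$ we have $\rho\circ\varphi\equiv0$, hence $0=\varphi^*d\rho=(P+\ol P)\,dt$, i.e.\ $P$ is purely imaginary. Moreover, the local frame of $N^{1,0}_M$ underlying the $\varphi$-trivialization is $[\varphi_*\base{t}]=P\cdot e$, where $e$ is the frame dual to the nowhere-vanishing section $\pa\rho$ of $(N^{1,0}_M)^*$; hence $h^2_\varphi=|e|^2_\rho\,|P|^2$, and one checks that $|e|^2_\rho$ is a positive constant (a short computation from $h_\rho=\nu\rho=|\nabla_g\rho|$, consistent with the metric-independence noted in the Remark). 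Combined with $\ol P=-P$, this yields $\pa_\zeta\log h_\varphi=\pa_\zeta\ol P/\ol P$ and, by conjugation, $\pa_{\ol\zeta}\log h_\varphi=\pa_{\ol\zeta}P/P$.

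Next I would compare the pullbacks of the two sides of the congruence. Since $\pa\opa\rho=d\opa\rho$, we get $\varphi^*(\pa\opa\rho)=d(\varphi^*\opa\rho)=d(\ol P\,dt)=\pa_\zeta\ol P\,d\zeta\wedge dt+\pa_{\ol\zeta}\ol P\,d\ol\zeta\wedge dt$. On the other hand, writing $\alpha\wedge\opa\rho$ and $\pa\rho\wedge\ol\alpha$ via any $(1,0)$-form $\tilde\alpha$ on $X$ near $M$ with $\tilde\alpha|_{T^{1,0}M}=\alpha$ --- which is precisely what makes those expressions well defined modulo $\mathcal{C}^0(M)\,\pa\rho\wedge\opa\rho$ --- the defining identity $\alpha=(\pa_\zeta\log h_\varphi)\,d\zeta$ forces $\varphi^*\tilde\alpha$ (again with no $d\ol\zeta$-term) to have $d\zeta$-coefficient $\pa_\zeta\log h_\varphi$, so $\varphi^*(\alpha\wedge\opa\rho+\pa\rho\wedge\ol\alpha)=\ol P(\pa_\zeta\log h_\varphi)\,d\zeta\wedge dt-P(\pa_{\ol\zeta}\log h_\varphi)\,d\ol\zeta\wedge dt$. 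Substituting the two identities from the first paragraph and using $\ol P=-P$, the two pullbacks coincide. Therefore the difference $\gamma$ of the two sides of the congruence is a $\mathcal{C}^0$ form of type $(1,1)$ on $M$ with $\varphi^*\gamma=0$; as $\varphi^*\gamma=0$ means $\gamma$ vanishes on $\Lambda^2TM$, we may write $\gamma=d\rho\wedge\beta'$ with $\beta'$ continuous, and matching bidegrees then gives $\gamma\in\mathcal{C}^0(M)\,\pa\rho\wedge\opa\rho$. Since such parametrizations cover $M$, the congruence holds.

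For the uniqueness part, if a continuous leafwise $(1,0)$-form $\beta$ satisfies the same congruence, then $\delta:=\alpha-\beta$ satisfies $\delta\wedge\opa\rho+\pa\rho\wedge\ol\delta\in\mathcal{C}^0(M)\,\pa\rho\wedge\opa\rho$; pulling back by $\varphi$ annihilates the right-hand side (as $\varphi^*(\pa\rho\wedge\opa\rho)=P\ol P\,dt\wedge dt=0$), and the computation above gives $\ol P\,D\,d\zeta\wedge dt-P\,\ol D\,d\ol\zeta\wedge dt=0$, where $D$ is the coefficient of $\delta$ in the frame $d\zeta$. Since $\varphi_*\base{t}$ is transverse to the leaves it does not lie in $\sigma_T\simeq T^{1,0}M=\Ker\pa\rho$, so $P\neq0$, whence $D=0$ and $\delta=0$. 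The steps that will need the most care are the bookkeeping between the two quotients at play --- leafwise $(1,0)$-forms on $M$, and $(1,1)$-forms on $M$ modulo $\mathcal{C}^0(M)\,\pa\rho\wedge\opa\rho$ --- and the verification that $|e|^2_\rho$ is constant. A more computational route, which exhibits where Levi-flatness is used, is to work in coordinates with $\rho_{z_2}\neq0$: Levi-flatness makes the $dz_1\wedge d\ol z_1$-coefficient of $\pa\opa\rho|_M$ vanish, so $\pa\opa\rho\equiv a\,dz_1\wedge\opa\rho+\ol a\,\pa\rho\wedge d\ol z_1\pmod{\pa\rho\wedge\opa\rho}$ on $M$ with $a=(\rho_{z_1\ol z_2}\rho_{z_2}-\rho_{z_1}\rho_{z_2\ol z_2})/|\rho_{z_2}|^2$, and one then checks directly that $\alpha=a\,dz_1$ along the leaves using the same Levi-flat identity.
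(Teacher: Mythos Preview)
Your argument is correct and complete; the key computation that $|e|^2_\rho$ is a positive constant (in fact $|e|^2_\rho=4$, since $\pa\rho$ applied to the image of $\xi$ in $T^{1,0}X$ equals $-\tfrac{i}{2}h_\rho$, while $|[\xi]|^2_\rho=h^2_\rho$) is the only step you left as a ``short computation,'' and it goes through exactly as you indicate.

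Your route, however, differs from the paper's. The paper works pointwise in a \emph{distinguished} parametrization: at a chosen $p\in M$ it normalizes so that $\varphi(\zeta,0)=(\zeta,0)$ and $\pa_t w(\zeta,0)\equiv 1$, then simply evaluates $\pa\opa\rho$ and $\alpha\wedge\opa\rho$ on the basis pairs $(\pa_{z_1},\pa_{\ol z_2})$, $(\pa_{z_1},\pa_{\ol z_1})$, $(\pa_{z_2},\pa_{\ol z_1})$ and reads off $(h_\rho)_\varphi=\pa\rho/\pa y_2$ directly from the chart. Uniqueness is handled by observing that the displayed identity, read as an equation for $\alpha$, determines its value on $\pa_{z_1}$. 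By contrast, you work with an \emph{arbitrary} foliation chart and argue globally by pullback: you identify $h_\varphi$ through the frame $e$ dual to $\pa\rho$, reduce the congruence to an equality of pulled-back $2$-forms on $V$, and recover the $\pa\rho\wedge\opa\rho$-ambiguity from the linear-algebra fact that a $(1,1)$-form vanishing on $\Lambda^2 TM$ lies in $\mathcal{C}^0(M)\,\pa\rho\wedge\opa\rho$. Your approach avoids the special normalization of the distinguished chart at the cost of the auxiliary verification about $|e|^2_\rho$; it also makes the role of the two quotients (leafwise $(1,0)$-forms and $(1,1)$-forms modulo $\pa\rho\wedge\opa\rho$) more transparent. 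The coordinate alternative you sketch at the end---expanding $\pa\opa\rho$ in the coframe $(dz_1,\pa\rho)$ and using Levi-flatness to kill the $dz_1\wedge d\ol z_1$-coefficient in that coframe---is closer in spirit to the paper's pointwise computation and yields the same $a$.
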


\begin{proof}
First see that this equality makes sense modulo $\pa\rho \wedge \opa\rho$. 
This is because a leafwise $(1,0)$-form $\alpha$ is a linear functional on $T^{1,0}M = \Ker \pa\rho$ 
and its extension on $T^{1,0}X$ is unique modulo $\pa\rho$. 

Now we regard the equality as an equation on $\alpha$ and solve this. 
Take a point $p \in M$ and a distinguished parametrization $\varphi(\zeta,t): V \to M$ in $(z_1, z_2)$. 
Then, we have at $p$, as an alternating 2-form,
\begin{align*}
(\pa\opa\rho)_p\left(\base{z_1}, \base{\ol{z_2}}\right)
&= \frac{\pa^2 \rho}{\pa z_1 \pa \ol{z_2}} (0,0)
= \frac{i}{2}\frac{\pa^2 \rho}{\pa z_1 \pa y_2} (0,0)\\
&
= \frac{i}{2}\frac{\pa}{\pa \zeta}\left(\frac{\pa \rho}{\pa y_2}\right) (0,0).
\end{align*}
Note that $\varphi$ gives the standard embedding $\C \times \{0\} \cap V \subset \C^2$ and
we are allowed to confuse $\zeta$ and $z_1$ on this image. 
Our local weight function $(h_\rho)_\varphi$ is just given by $\frac{\pa \rho}{\pa y_2}$,
therefore, we have
\[
(\pa\opa\rho)_p\left(\base{z_1}, \base{\ol{z_2}}\right)
= \frac{i}{2}\frac{\pa (h_\rho)_\varphi}{\pa \zeta} (0,0).
\]
Similarly, 
\begin{align*}
(\alpha \wedge \opa\rho)_p \left(\base{z_1}, \base{\ol{z_2}}\right)
&= \alpha_p\left(\base{z_1}\right) (\opa\rho)_p\left(\base{\ol{z_2}}\right)
= \alpha_p\left(\base{z_1}\right)\frac{\pa \rho}{\pa \ol{z_2}} (0,0)\\
&= \alpha_p\left(\base{\zeta}\right) \cdot \frac{i}{2} (h_\rho)_\varphi (0,0).
\end{align*}
Hence, $\alpha$ should agree with the one given above. 

This $\alpha$ actually gives the solution since we can check the equality by looking at 
\[
(\pa\opa\rho)_p\left(\base{z_1}, \base{\ol{z_1}}\right) = 0, 
\quad (\pa\opa\rho)_p\left(\base{z_2}, \base{\ol{z_1}}\right) = \ol{(\pa\opa\rho)_p\left(\base{z_1}, \base{\ol{z_2}}\right)}.
\]
Since we want to compute $\pa\opa\rho$ only modulo $\pa\rho\wedge\opa\rho$, 
 we can ignore the contribution coming from $(\pa\opa\rho)_p\left(\base{z_2}, \base{\ol{z_2}}\right)$.
\end{proof}

%%%%%%%%%%%%%%%%%%%%%%%%%%%%%%%%%%%%%%%%%%%%%%%%%%%%%%%%%%%%%%%%%%%%
\section{An adjunction-type equality}
\label{sect:adjunction}

In this section, we relate the totally real Ricci curvature, which we are going to estimate, 
with the form $\alpha$ via the Gauss' equation. 
Let $M$ be an oriented $\mathcal{C}^2$-smooth Levi-flat real hypersurface without boundary in a K\"ahler surface $(X, J_X, g)$.
We restrict the Riemannian metric $g$ on $M$ and 
denote by $\nabla^M$, $R^M$, $\Ric^M$ its Levi-Civita connection, curvature tenor, Ricci tensor respectively.
We will compare the bisectional curvature of $X$ and the totally real Ricci curvature of $M$ and see that 
their difference is exactly the squared norm of $\alpha$ induced from the signed boundary distance function. 

We consider near $M$ the signed boundary distance function $\delta$ with respect to the Riemannian metric $g$, namely, 
\[
\delta(p) = \pm \inf_{q \in M} \mathrm{dist}_g(p, q) 
\]
where we choose the sign so that $\delta$ becomes a defining function of $M$. 
It is well-known that $\delta$ is actually of $\mathcal{C}^2$-smooth near $M$.
Using this particular defining function $\delta$ of $M$, we induce a hermitian metric, simply denoted by $h^2$, 
on the holomorphic normal bundle $N^{1,0}_M$ as described in \S\ref{subsect:normal}, 
and consider the form $\alpha$ with respect to this $h^2$. 

Under this setting, we compute the difference in terms of $\alpha$ as follows:

\begin{Proposition}
\label{local}
The following equality holds:
\[
H\left(\sigma_T, \sigma_N \right) - \Ric^M(\xi, \xi) =  4i\alpha \wedge \ol{\alpha}/\omega
\]
where $\omega$ is the fundamental form of $g$ and the ratio of $i\alpha \wedge \ol{\alpha}$ and $\omega$ is taken as quadratic forms on $T^{1,0}M$.
\end{Proposition}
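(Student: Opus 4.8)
The plan is to perform the computation at an arbitrary point $p \in M$ using a distinguished parametrization $\varphi(\zeta, t)$ in a normalized holomorphic chart $(z_1, z_2)$, so that the Fubini--Study-type normalization $\omega = i(dz_1 \wedge d\ol{z_1} + dz_2 \wedge d\ol{z_2})$ holds at $p$ and $\varphi_*(\pa/\pa t) = \pa/\pa x_2$ along $\C \times \{0\}$. With these choices, $\sigma_T$ is spanned by $\pa/\pa z_1$ and $\sigma_N$ by $\pa/\pa z_2$ at $p$, both unit vectors, so $H(\sigma_T, \sigma_N) = g(R(\pa/\pa z_1, J\pa/\pa z_1)J\pa/\pa z_2, \pa/\pa z_2)$ is directly a component of the ambient curvature tensor, while $\Ric^M(\xi, \xi)$ is the trace over $TM$ of $R^M(\cdot, \xi)\xi$ with $\xi$ corresponding to $\pa/\pa x_2$ restricted to $M$.

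The key step is Gauss' equation relating $R^M$ to $R$ and the second fundamental form $\mathrm{II}$ of $M \subset X$. Since $M$ is Levi-flat, the second fundamental form vanishes on the complex tangent directions $\sigma_T$ (this is precisely the statement that the Levi leaves are totally geodesic in the leafwise directions up to the ambient connection — or more carefully, that $\mathrm{II}$ restricted to $\sigma_T$ has no component forcing curvature corrections in those directions), so the Gauss correction terms in the trace defining $\Ric^M(\xi,\xi)$ survive only in the $\sigma_N \cap TM$ direction, i.e. the Reeb direction itself, where they vanish trivially, and in the $\sigma_T$ directions, where $\mathrm{II}$ need not vanish because the leaves bend in the normal ($\nu$) direction as $t$ varies. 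I would compute $\mathrm{II}(\pa/\pa z_1, \pa/\pa z_1)$ in terms of the parametrization: the failure of $\varphi(\zeta, t)$ to stay flat, measured by $\pa^2 w/\pa\zeta\,\pa t$ or equivalently by the $\zeta$-derivative of the normal component of $\varphi_*(\pa/\pa t)$, is exactly what enters. This is where $\alpha$ appears: by the computation in the proof of Lemma \ref{alpha}, $\alpha(\pa/\pa\zeta)$ at $p$ equals (up to the normalization constant $i/2$ and the value $(h_\delta)_\varphi(0,0)$) the quantity $\pa (h_\delta)_\varphi/\pa\zeta$, and $h_\delta = \nu\delta$ measures precisely the normal derivative of the distance function, tying the second fundamental form of the leaves to $\pa\log h_\varphi/\pa\zeta$.

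So the main steps are: (1) write both sides at $p$ in the normalized chart; (2) apply Gauss' equation to express $\Ric^M(\xi,\xi) = H(\sigma_T, \sigma_N) - [\text{second fundamental form terms}]$ — note the bisectional curvature appears because the ambient-curvature part of the Gauss trace, after using the Kähler symmetries of $R$, collapses to exactly $H(\sigma_T, \sigma_N)$; (3) identify the second fundamental form terms with $|\alpha|^2_\omega$ by relating $\mathrm{II}$ along the leaves to the $\zeta$-variation of $h_\delta$, using that $\delta$ is the signed distance (hence $|\nabla\delta|\equiv 1$ near $M$, which kills lower-order terms and makes the identification clean); (4) rewrite the resulting scalar as the ratio $4i\alpha\wedge\ol{\alpha}/\omega$ of quadratic forms on $T^{1,0}M$, checking the constant $4$ against the conventions $dV_\omega = \omega\wedge\omega/8$ and $\omega = 2i\sum g_{j\ol k}\,dz_j \wedge d\ol{z_k}$.

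The hard part will be step (3): carefully relating the second fundamental form of the Levi-flat hypersurface — or rather the mixed object coming from how individual leaves sit inside $X$ — to the derivative defining $\alpha$, while keeping track of all the factors of $2$ and $i$ coming from the $TX \simeq T^{1,0}X$ identification and from expressing real curvature quantities like $\Ric^M(\xi,\xi)$ in complex-analytic terms. A subtlety to handle with care is that $\alpha$ is built from the normal bundle metric $h_\delta^2 = (\nu\delta)^2$, whereas the second fundamental form naturally involves $\nabla_{\pa/\pa z_1}(\pa/\pa z_1)$; the bridge is that for the signed distance function the Hessian of $\delta$ restricted to $TM$ is, up to sign, the scalar second fundamental form of $M$, and its complex-tangential part is governed by $\pa\opa\delta$, which Lemma \ref{alpha} already decomposes in terms of $\alpha$. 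Invoking that lemma at $p$ should let me bypass most of the raw differentiation and reduce step (3) to bookkeeping.
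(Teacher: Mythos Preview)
Your overall plan---work at $p$ in a distinguished parametrization, apply Gauss' equation, and identify the surviving shape-operator terms with $|\alpha|^2$---is exactly the paper's approach. But you have misidentified which pieces of the second fundamental form actually survive, and this would derail the computation if you carried it out as written. When you expand $\Ric^M(\xi,\xi)$ via Gauss using the orthonormal frame $\{\pa/\pa x_1,\pa/\pa y_1,\xi\}$ of $T_pM$, the correction terms are
\[
g(A\tfrac{\pa}{\pa x_1},\xi)^2 + g(A\tfrac{\pa}{\pa y_1},\xi)^2 \;-\; g(A\xi,\xi)\Bigl(g(A\tfrac{\pa}{\pa x_1},\tfrac{\pa}{\pa x_1})+g(A\tfrac{\pa}{\pa y_1},\tfrac{\pa}{\pa y_1})\Bigr).
\]
It is the \emph{second} block that vanishes, because the trace of $A|_{\sigma_T}$ is zero (the leaves, being complex curves in a K\"ahler surface, are minimal). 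The \emph{first} block---the mixed components $g(A\,\sigma_T,\xi)$---is what survives and must be matched with $\alpha$. Your claim that ``the second fundamental form vanishes on the complex tangent directions $\sigma_T$'' is false (only its trace does), and your plan to compute $\mathrm{II}(\pa/\pa z_1,\pa/\pa z_1)$ targets the pure $\sigma_T\times\sigma_T$ part, which is exactly the part that is \emph{not} needed. Likewise the heuristic $\pa^2 w/\pa\zeta\,\pa t$ is identically zero along the central leaf because the distinguished parametrization forces $\pa w/\pa t(\zeta,0)\equiv 1$; the relevant variation lives in the metric, not in $\varphi$.

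The paper does not route step~(3) through Lemma~\ref{alpha}. Instead it computes the mixed terms directly via the Christoffel symbols and the K\"ahler identities, obtaining $g(A\tfrac{\pa}{\pa x_1},\xi)=-\tfrac12\,\pa_{y_1}g_{x_2x_2}$ and $g(A\tfrac{\pa}{\pa y_1},\xi)=\tfrac12\,\pa_{x_1}g_{x_2x_2}$ at $p$; then it writes $h_\varphi$ explicitly in terms of the $g_{jk}$ (the adjunction-type formula $(\ref{adjunction-metric})$) and differentiates to get $i\alpha\wedge\ol\alpha/\omega=\tfrac{1}{16}\bigl((\pa_{x_1}g_{x_2x_2})^2+(\pa_{y_1}g_{x_2x_2})^2\bigr)$, which matches. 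So the bookkeeping you worry about is handled by a brute-force metric computation rather than by invoking Lemma~\ref{alpha}; once you fix the target of step~(3) to the mixed terms $g(A\,\sigma_T,\xi)$, your outline becomes the paper's proof.
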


The main ingredient of the proof is Gauss' equation: for any real hypersurface $M$ in a Riemannian manifold, we have
\begin{align*}
& g(R(v_1, v_2)v_3, v_4) - g(R^M(v_1, v_2)v_3, v_4) \\
& = g(Av_1, v_3) g(Av_2, v_4) - g(Av_2, v_3) g(Av_1, v_4)
\end{align*}
where $v_j \in TM$ and $A$ denotes the shape operator of $M \subset X$, namely, for $v \in T_pM$, we let
\[
A v := -\nabla_v \nu \in T_pM.
\]

\begin{proof}[Proof of Proposition \ref{local}]
Fix a point $p \in M$ and take a distinguished parametrization around $p$, say $\varphi(\zeta,t): V \to M$ in $(z_1, z_2)$. 
Applying Gauss' equation at $p$ for 
\[
v_1 = v_4 = \left(\base{x_1}\right)_p, \quad v_2 = v_3 = \left(\base{x_2}\right)_p = \xi_p
\]
and 
\[
v_1 = v_4 = \left(\base{y_1}\right)_p, \quad v_2 = v_3 = \left(\base{x_2}\right)_p = \xi_p
\] 
respectively and adding resulting two equalities, we have at $p$
\begin{align}
H\left(\sigma_T, \sigma_N \right) - \Ric^M \left(\xi, \xi \right) \label{gauss} 
& = g(A\base{x_1}, \xi)^2 + g(A\base{y_1}, \xi)^2 \\
& - g(A\xi, \xi) \left(g(A\base{x_1}, \base{x_1}) + g(A\base{y_1}, \base{y_1})\right).  \nonumber
\end{align}
Note that the totally real Ricci curvature at $p$ is by its definition
\[
\Ric^M(\xi, \xi) = g(R\left(\base{x_1}, \xi\right)\xi, \base{x_1}) +  g(R\left(\base{y_1}, \xi\right)\xi, \base{y_1}).
\]

We observe that the last term in (\ref{gauss}) is zero. 
This is because any complex submanifold in any K\"ahler surface is minimal with respect to the K\"ahler metric, 
hence, the trace of the shape operator restricted on the tangent space of a complex submanifold is always zero.
We therefore have
\[
H\left(\sigma_T, \sigma_N \right) - \Ric^M \left(\xi, \xi \right) = g(A\base{x_1}, \xi)^2 + g(A\base{y_1}, \xi)^2.  \\
\]

The rest of the proof is to show the equality
\[
g(A\base{x_1}, \xi)^2 + g(A\base{y_1}, \xi)^2 = 4i\alpha \wedge \ol{\alpha}/\omega
\]
by direct computation. First we compute its first term at $p$
\[
g(A\base{x_1}, \xi)  = -g(\nabla_{\base{x_1}} \nu, \xi).
\] 
Our normal vector field $\nu$ is expressed  as
\[
\nu = \sqrt{ \frac{g_{x_1 x_1}}{g_{x_1x_1}g_{y_2y_2} - (g_{x_1y_2}^2 + g_{y_1y_2}^2)} } 
\left( \base{y_2} - \frac{g_{x_1y_2}}{g_{x_1x_1}}\base{x_1} - \frac{g_{y_1y_2}}{g_{y_1y_1}}\base{y_1} \right). 
\]
on $\C \times \{0\} \cap V$ where we use notations
\[
g_{x_j x_k} := g\left(\base{x_j}, \base{x_k}\right), 
g_{x_j y_k} := g\left(\base{x_j}, \base{y_k}\right), 
g_{y_j y_k} := g\left(\base{y_j}, \base{y_k}\right)
\]
for short. Using the explicit expression of the Christoffel symbol of the Levi-Civita connection 
and the K\"ahlerity of $g$, we have at $p$
\begin{align*}
g(A\base{x_1}, \xi) 
& = -g(\nabla_{\base{x_1}} \base{y_2}, \base{x_2})\\
&= -\frac{1}{2} \left(\base{x_1} g_{y_2 x_2} + \base{y_2} g_{x_1 x_2} - \base{x_2} g_{x_1 y_2}\right) \\
&= -\frac{1}{2} \left(\base{y_2} g_{x_1 x_2} - \base{x_2} g_{x_1 y_2}\right) \\
&= -\frac{1}{2} \left(\base{y_1} g_{x_2 x_2} \right).
\end{align*}
We can compute the second term at $p$ in the same way: 
\begin{align*}
g(A\base{y_1}, \xi) 
&= \frac{1}{2} \left(\base{x_1} g_{x_2 x_2} \right).
\end{align*}

On the other hand, we have on $\C \times \{0\} \cap V$, 
\begin{equation}
\label{adjunction-metric}
h_\varphi = \frac{\pa \delta}{\pa y_2} = g(\nu, \base{y_2}) = \sqrt{g_{x_2x_2} - \frac{g_{x_1y_2}^2 + g_{y_1y_2}^2}{g_{x_1x_1}}}.
\end{equation}
and we have at $p$
\[
i\alpha \wedge \ol{\alpha}/\omega
= \left|\base{z_1} \log h_\varphi\right|^2 
= \frac{1}{16} \left( \left(\base{x_1} g_{x_2x_2}\right)^2 + \left(\base{y_1} g_{x_2x_2}\right)^2 \right).
\]
This completes the proof. 
\end{proof}

\begin{Corollary}
Let $M$ be an oriented $\mathcal{C}^2$-smooth Levi-flat real hypersurface without boundary in a K\"ahler surface $X$. 
Then its totally real Ricci curvature satisfies 
\[
\Ric^M(\xi, \xi) \leq H\left(\sigma_T, \sigma_N \right).
\]
In particular, when $X$ is $\C\PP^2$ equipped with the Fubini--Study metric,
\[
\Ric^M(\xi, \xi) \leq 2.
\]
\end{Corollary}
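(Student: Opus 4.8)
The plan is to derive the Corollary directly from Proposition \ref{local}. The content of that proposition is the pointwise identity
\[
H\left(\sigma_T, \sigma_N\right) - \Ric^M(\xi,\xi) = 4i\,\alpha \wedge \ol{\alpha}/\omega,
\]
so the inequality $\Ric^M(\xi,\xi) \leq H(\sigma_T,\sigma_N)$ is equivalent to the assertion that the right-hand side is nonnegative everywhere on $M$. Hence the first and only real step is to observe that $i\alpha\wedge\ol\alpha/\omega \geq 0$ as a ratio of quadratic forms on $T^{1,0}M$. Concretely, if we write $\alpha = a\,d\zeta$ in a distinguished parametrization, then $i\alpha\wedge\ol\alpha = |a|^2\, i\,d\zeta\wedge d\ol\zeta$, and at the base point $p$ the Kähler form restricted to the leaf direction is $i\,dz_1\wedge d\ol{z_1}$ up to a positive factor; the quotient is therefore $|a|^2 \geq 0$. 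This is exactly the quantity $\left|\base{z_1}\log h_\varphi\right|^2$ that appears at the end of the proof of Proposition \ref{local}, manifestly a square of a real expression and so nonnegative. I would simply cite this: since $4i\alpha\wedge\ol\alpha/\omega \geq 0$, subtracting it from $H(\sigma_T,\sigma_N)$ can only decrease the value, giving $\Ric^M(\xi,\xi) \leq H(\sigma_T,\sigma_N)$.

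For the second, more specific assertion, I would specialize $X = \C\PP^2$ with the Fubini--Study metric and plug in the bisectional-curvature formula recorded in the first section,
\[
H(\sigma_1,\sigma_2) = 2\bigl(1 + g(v_1,v_2)^2 + g(v_1, Jv_2)^2\bigr).
\]
Here $\sigma_1 = \sigma_T$ and $\sigma_2 = \sigma_N$ are mutually orthogonal $J$-invariant planes: $\sigma_N$ is the orthogonal complement of $\sigma_T$ in $TX$ by definition, and since both are $J$-invariant and $g$ is $J$-invariant, a unit vector $v_1 \in \sigma_T$ is also orthogonal to $Jv_2$ for any $v_2 \in \sigma_N$. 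Therefore $g(v_1,v_2) = 0$ and $g(v_1, Jv_2) = 0$, and the formula collapses to $H(\sigma_T,\sigma_N) = 2$. Combining with the first part yields $\Ric^M(\xi,\xi) \leq 2$ everywhere on $M$.

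There is no serious obstacle here; the Corollary is a formal consequence of Proposition \ref{local} together with the positivity of a squared norm and the explicit Fubini--Study bisectional curvature. The only point deserving a word of care is the orthogonality claim $v_1 \perp Jv_2$, which is where Kähler-ness (equivalently $J$-invariance of $g$) and the $J$-invariance of the plane fields $\sigma_T$, $\sigma_N$ enter; once that is noted the rest is immediate. I would therefore write the proof in just a few lines.

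\begin{proof}
By Proposition \ref{local}, at each point of $M$ we have
\[
H\left(\sigma_T, \sigma_N\right) - \Ric^M(\xi,\xi) = 4i\alpha \wedge \ol{\alpha}/\omega.
\]
Writing $\alpha = a\,d\zeta$ in a distinguished parametrization, the right-hand side equals $4|a|^2 \geq 0$ as computed in the proof of Proposition \ref{local}, so $\Ric^M(\xi,\xi) \leq H\left(\sigma_T,\sigma_N\right)$. When $X = \C\PP^2$ with the Fubini--Study metric, take unit vectors $v_1 \in \sigma_T$ and $v_2 \in \sigma_N$; since $\sigma_N$ is the $g$-orthogonal complement of $\sigma_T$ in $TX$ and both plane fields are $J$-invariant with $g$ being $J$-invariant, we have $g(v_1,v_2) = 0$ and $g(v_1, Jv_2) = 0$. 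Hence
\[
H\left(\sigma_T,\sigma_N\right) = 2\bigl(1 + g(v_1,v_2)^2 + g(v_1, Jv_2)^2\bigr) = 2,
\]
and therefore $\Ric^M(\xi,\xi) \leq 2$.
\end{proof}
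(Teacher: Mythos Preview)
Your proof is correct and follows exactly the route the paper has in mind: the Corollary is stated without its own proof because it is an immediate consequence of Proposition~\ref{local} (nonnegativity of the squared norm $4i\alpha\wedge\ol\alpha/\omega$) together with the Fubini--Study bisectional curvature formula applied to the orthogonal $J$-invariant planes $\sigma_T$ and $\sigma_N$. Your justification of $g(v_1,Jv_2)=0$ via $J$-invariance of both planes and of $g$ is precisely the missing one-line check.
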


\begin{Remark}
The induced metric $h_\varphi$ in (\ref{adjunction-metric}) is exactly the metric induced from 
the adjunction formula for Levi-flat real hypersurfaces (cf. \cite{Der}), 
\[
K_X|M \otimes N^{1,0}_M = (T^{1,0}M)^*. 
\]
Here we equip $K_X|M$ and $(T^{1,0}M)^*$ with the hermitian metrics induced from the given K\"ahler metric $g$. 
From this viewpoint we will revisit Takeuchi's inequality \cite{T} in Appendix \ref{sect:takeuchi}. 
\end{Remark}

%%%%%%%%%%%%%%%%%%%%%%%%%%%%%%%%%%%%%%%%%%%%%%%%%%%%%%%%%%%%%%%%%%%%
\section{An integral formula}
\label{sect:integral}

In this section, we will prove an estimate for $L^2$-norm of smooth sections of hermitian holomorphic line bundles 
over {pseudoconcave domains in K\"ahler surfaces. 
This will be used in the next section to show a finiteness theorem for holomorphic sections 
of negative holomorphic line bundles over domains with Levi-flat boundary in $\C\PP^2$. 

As in the previous sections, let $X$ be a complex surface equipped with a K\"ahler metric $g$ and denote its fundamental form by $\omega$. 
We consider a relatively compact domain $\Omega \Subset X$ with $\mathcal{C}^2$-smooth boundary $M$. 
Later we will assume $M$ to be pseudoconcave in \S\ref{subsect:pseudoconcave} or Levi-flat in \S\ref{sect:finiteness}. 

Take a defining function $\rho$ of $M$ that is extended on $\ol{\Omega}$. 
We normalize our defining function so as to satisfy $\vert d\rho\vert_g =1$ on $M$; 
this is always possible replacing $\rho$ by $\rho/\vert d \rho\vert_g$ near $M$ and 
using a partition of unity argument.
Later in \S\ref{sect:finiteness} we will choose $\rho$ as the signed boundary distance function to $M$. 
We will denote by $dv_{M}$ the area element of $M$ with respect to the restriction of the Riemannian metric $g$ on $M$.
\\

%%%%%%%%%%%%%%%%%%%%%%%%%%%%%%%%%%%%%%%%%%%%%%%%%%%%%%%%%%%%%%%%%%%%%%%%%%%%%%%%%%%%%%%
\subsection{An integral formula for functions over complex manifolds}
\label{subsect:griffiths}
The main ingredient of our $L^2$-estimate is an integral formula for functions, which has already appeared in Griffiths' paper \cite{G}. 

Choose locally an orthonormal frame $\omega^1,\omega^2$ of $(1,0)$-forms with dual frame $L_1, L_2$ of $T^{1,0}X$ with respect to 
the hermitian metric $g$. On $M$, we also require that $\omega^2= \sqrt{2} \pa \rho$.
Then, $L_1$ gives a local trivialization of the holomorphic tangent bundle $T^{1,0}M$ and 
the Levi-form of $M$ can be identified with a scalar function, say $\ell_{\rho}(p) = \pa\opa \rho(p)(L_1,\ol L_1)$ for $p \in M$.
It is easily seen that $\ell_{\rho}$ is independent of the choice of the orthonormal frame. \\

We now define the nonnegative $(1,1)$-form $\omega_\rho$ by
$$\omega_\rho = 2i\pa\rho\wedge\opa\rho.$$
What is important is that the form $\omega_\rho$ has essentially the same properties as the form $\Omega_\tau$ in the paper of Griffiths \cite{G} for $n=2$.
One can then prove the following integral formula (see also \cite[p.\ 433]{G}):

\begin{Theorem}
\label{integral}
For any function $g\in\mathcal{C}^\infty(\ol \Omega)$ one has
$$\int_\Omega (i\pa\opa g)\wedge\omega_\rho - \int_\Omega ig\pa\opa\omega_\rho = \int_M g\cdot \ell_\rho 4dv_M.$$
\end{Theorem}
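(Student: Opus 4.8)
The plan is to prove Theorem \ref{integral} by a straightforward application of Stokes' theorem to a suitably chosen $3$-form on $\ol\Omega$, after checking that the interior and boundary terms organize themselves into the claimed expression. First I would observe that the left-hand side can be rewritten as an integral of an exact form. Indeed, using the Leibniz rule for $d = \pa + \opa$ one has
\[
d\left( i\opa g \wedge \omega_\rho + i g\, \pa\omega_\rho \right) = (i\pa\opa g)\wedge\omega_\rho - i g\,\pa\opa\omega_\rho + (\text{terms involving } \opa g \wedge \opa\omega_\rho \text{ and } \pa g \wedge \pa\omega_\rho),
\]
so the first step is to find the correct primitive whose exterior derivative is exactly the integrand $(i\pa\opa g)\wedge\omega_\rho - i g\,\pa\opa\omega_\rho$ with no spurious terms. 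Here one exploits that $\omega_\rho = 2i\,\pa\rho\wedge\opa\rho$ is a $(1,1)$-form of rank one, so that on a complex surface several a priori different wedge products vanish for bidegree reasons (e.g. $\pa\rho \wedge \pa\rho = 0$, and $\opa g \wedge \opa\rho \wedge \opa\rho = 0$), which is the analogue of the special properties of Griffiths' form $\Omega_\tau$ alluded to in the text. I expect the right primitive to be (a multiple of) $i\opa g \wedge \omega_\rho$, or a closely related combination, and the bidegree bookkeeping on $\C^2$ to collapse the extra terms.

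The second step is to apply Stokes' theorem, $\int_\Omega d\beta = \int_M \beta$, and to identify the pullback of $\beta$ to $M$. Since $\omega_\rho$ contains the factor $\pa\rho \wedge \opa\rho$, only the component of $\opa g$ transverse to $M$ survives after restriction, and one uses the normalization $|d\rho|_g = 1$ on $M$ together with the orthonormal coframe $\omega^1, \omega^2 = \sqrt 2\,\pa\rho$ to express $\beta|_M$ in terms of $g$, the Levi scalar $\ell_\rho = \pa\opa\rho(L_1,\ol L_1)$, and the area element $dv_M$. The combinatorial constant $4$ should emerge from these normalizations (the factors of $2$ in $\omega_\rho = 2i\pa\rho\wedge\opa\rho$ and $\omega^2 = \sqrt 2\,\pa\rho$, together with the relation between $i\omega^1\wedge\ol\omega^1$ and the area form on $M$).

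I anticipate two places where care is needed. The main obstacle is the precise identification of the boundary term: one must compute $\beta|_M$ correctly, keeping track of how $\opa g$ decomposes into its tangential part along the leaves of the induced CR structure and its transverse part, and how the $\opa\omega_\rho$ term contributes $\pa\opa\rho$ and hence the Levi form $\ell_\rho$ upon restriction; getting the constant and the sign right is the delicate point. A secondary, more routine issue is the regularity hypothesis: $\rho$ is only $\mathcal{C}^2$, so $\omega_\rho$ is merely $\mathcal{C}^1$ and $\pa\opa\omega_\rho$ is a distributional object a priori; I would handle this either by first proving the formula for smooth $\rho$ and approximating, or by noting that all terms make sense as the pairing against $g \in \mathcal{C}^\infty(\ol\Omega)$ is benign and the identity is stable under $\mathcal{C}^2$-convergence of $\rho$. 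Once the primitive is pinned down and Stokes' theorem applied, the rest is the bookkeeping of constants described above.
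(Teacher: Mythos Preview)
Your approach is correct and is essentially the paper's argument, just organized from the interior outward rather than from the boundary inward. The paper first isolates the boundary identity $i(\pa-\opa)\omega_\rho\,|_M = 8\,\ell_\rho\, dv_M$ and then applies Stokes' theorem twice (once to $\tfrac12\, g\, i(\pa-\opa)\omega_\rho$, and again to an exact remainder $d\{i(\pa g - \opa g)\wedge\omega_\rho\}$, which has zero boundary trace since $\omega_\rho|_M = 0$); your single application of Stokes to the primitive $\beta = i\,\opa g\wedge\omega_\rho + i g\,\pa\omega_\rho$ collapses these two steps into one, and the bidegree vanishing you anticipate (the spurious terms are of type $(1,3)$ and $(3,1)$ on a surface) makes $d\beta$ exactly the left-hand integrand. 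One simplification you are missing: because $\omega_\rho|_M = 0$, the piece $i\,\opa g\wedge\omega_\rho$ has zero pullback to $M$, so the entire boundary contribution is $i g\,\pa\omega_\rho|_M$ and no decomposition of $\opa g$ into tangential and transverse parts is needed; the constant $4$ then falls out from $\pa\omega_\rho|_M = \tfrac12(\pa-\opa)\omega_\rho|_M$ (using $d\rho|_M=0$) together with the boundary identity above.
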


\begin{proof}
First of all it is easy to check that one has 
\begin{equation}  \label{lemma}
i(\pa-\opa)\omega_\rho \mid M = 8 \ell_\rho dv_{M}.
\end{equation} 
Indeed,   
\begin{equation}  \label{a}
i(\pa-\opa)\omega_\rho = -2(\pa-\opa) (\pa\rho\wedge\opa\rho) = 2\pa\opa\rho \wedge(\pa\rho-\opa\rho).
\end{equation}
On the other hand, we have 
\begin{equation}  \label{b}
dv_{M} = \ast d\rho \mid M = \frac{1}{4} \omega_1\wedge\ol\omega_1 \wedge(\pa\rho-\opa\rho).
\end{equation}
Since $\pa\rho\wedge\opa\rho =0$ on $M$, (\ref{a}) and (\ref{b}) imply (\ref{lemma}).\\

Now (\ref{lemma}) and Stokes' theorem imply
\begin{equation}  \label{c}
\int_{M} g \cdot \ell_\rho 4dv_{M} = \frac{1}{2} \int_{M} g i(\pa-\opa) \omega_\rho = \frac{1}{2} \int_\Omega d\lbrace g i(\pa-\opa)\omega_\rho\rbrace.
\end{equation}
But for bidegree reasons
\begin{eqnarray*} 
d\lbrace gi(\pa-\opa)\omega_\rho\rbrace 
& = & -2gi\pa\opa\omega_\rho + dg\wedge i(\pa-\opa)\omega_\rho\\
& = & -2gi\pa\opa\omega_\rho + i\left( - \pa g \wedge \opa \omega_\rho + \opa g \wedge \pa \omega_\rho \right)\\
& = & -2gi\pa\opa\omega_\rho + d \left\lbrace i \left(\pa g\wedge\omega_\rho -  \opa g \wedge\omega_\rho \right) \right\rbrace + 2i\pa\opa g\wedge\omega_\rho.
\end{eqnarray*}
By (\ref{c}) and Stokes' theorem again (note that $\omega_\rho\mid M =0$), we get
$$\int_{M} g\cdot \ell_\rho 4dv_{M} = \int_\Omega i\pa\opa g\wedge\omega_\rho - \int_\Omega g i\pa\opa\omega_\rho.$$
\end{proof}

%%%%%%%%%%%%%%%%%%%%%%%%%%%%%%%%%%%%%%%%%%%%%%%%%%%%%%%%%%%%%%%%%%%%%%%%%%%%%%%%%%%%%%%
\subsection{An estimate for sections over pseudoconcave domains}
\label{subsect:pseudoconcave}

Let $L$ be a holomorphic line bundle on $X$ with a hermitian metric $h$. 
For an (open or closed) subset $W \subset X$ we will use the following notations 
for sections over $W$:
\begin{itemize}
\item $\mathcal{C}^\infty_p (W, L)$ denotes the space of smooth $p$-forms with values in $L$. 
\item $\mathcal{C}^\infty_c(W, L)$ denotes the space of smooth sections of $L$ with compact support in $W$.
\item $L^2_{p,q}(W,L)$ denotes the Hilbert space of square-integrable $(p,q)$-forms with values in $L$ 
with respect to the $L^2$-norm $\Vert\cdot\Vert$ induced by $g$ and $h$.
\end{itemize}

The Chern connection $D= D^\prime+ D^{\prime\prime}$ of $L$ is the unique connection 
whose $(0,1)$-part $D^{\prime\prime}$ coincides with the canonical $\opa$-operator of $L$ and 
which is compatible with the hermitian structure of $L$; that is 
for every $s_1\in \mathcal{C}^\infty_p(X,L), s_2\in\mathcal{C}^\infty_q(X,L)$ we have
\begin{equation}  \label{compatible}
d\lbrace s_1,s_2\rbrace = \lbrace Ds_1,s_2\rbrace + (-1)^p \lbrace s_1, Ds_2\rbrace.
\end{equation}
Here the sequilinear map $\lbrace \ , \  \rbrace$ is defined as usual: 
If $e$ is a local frame of $L$, and $s_1 = f_1\otimes e,\ s_2= f_2\otimes e$, then
$$\lbrace s_1,s_2\rbrace = f_1\wedge\ol f_2 |e|^2.$$
We denote by $\Theta(L)$ the curvature of $D$ for given $h$. \\

We can now prove the following:

\begin{Proposition}
\label{sect}
Assume that $M$ is pseudoconcave, i.e. $\ell_\rho \leq 0$. Then
for any section $s\in\mathcal{C}^\infty(\ol \Omega,L)$ one has
\begin{equation*}   
\label{apriori}
\int_\Omega \vert s\vert^2 \left(-i\Theta(L)\wedge\omega_\rho  - i\pa\opa\omega_\rho \right)
\leq   4\int_\Omega (\vert\opa s\vert^2\cdot\vert\omega_\rho\vert +\vert \opa s\vert\cdot\vert s\vert\cdot\vert\pa\omega_\rho\vert) dV_\omega.
\end{equation*}
\end{Proposition}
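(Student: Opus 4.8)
The plan is to apply the integral formula of Theorem~\ref{integral} to the function $g = |s|^2$, where $|s|^2 = \{s,s\}/(\text{local frame norm})$ is the pointwise squared norm of $s$ with respect to $h$; since $s \in \mathcal{C}^\infty(\ol\Omega, L)$, this is a genuine function in $\mathcal{C}^\infty(\ol\Omega)$. Substituting into Theorem~\ref{integral} gives
\[
\int_\Omega (i\pa\opa |s|^2) \wedge \omega_\rho - \int_\Omega i|s|^2 \pa\opa\omega_\rho = \int_M |s|^2 \ell_\rho 4 \, dv_M.
\]
The pseudoconcavity hypothesis $\ell_\rho \leq 0$ makes the right-hand side $\leq 0$, so
\[
\int_\Omega i|s|^2 \pa\opa\omega_\rho \geq \int_\Omega (i\pa\opa |s|^2) \wedge \omega_\rho.
\]
The substance of the argument is thus to compute $i\pa\opa|s|^2$ via the Bochner--Kodaira machinery encoded in the compatibility relation \eqref{compatible} and to bound the resulting terms from below by the curvature term $-i|s|^2\Theta(L)\wedge\omega_\rho$ minus the $\opa s$ error terms on the right of the claimed inequality.

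The key computation is the standard Weitzenb\"ock-type identity for $i\pa\opa\{s,s\}$. Using \eqref{compatible} twice, one has $d\{s,s\} = \{Ds,s\} + \{s,Ds\}$, and applying $d$ again together with $D^2 = \Theta(L)$, one obtains after separating bidegrees the identity
\[
i\pa\opa\{s,s\} = i\{D's, D's\} - i\{\opa s, \opa s\} - i\{\Theta(L)s, s\}
\]
as a $(1,1)$-form (signs to be fixed by the conventions in the excerpt); wedging with $\omega_\rho$ and integrating, the term $i\{D's,D's\}\wedge\omega_\rho$ is a nonnegative $(2,2)$-form (here $\omega_\rho = 2i\pa\rho\wedge\opa\rho \geq 0$ and $\omega_\rho$ is of rank one, so only the component of $D's$ along the Levi distribution contributes, but that component is still a sum of squares and hence $\geq 0$), so it may be discarded to obtain
\[
\int_\Omega (i\pa\opa|s|^2)\wedge\omega_\rho \geq -\int_\Omega i|s|^2 \Theta(L)\wedge\omega_\rho - \int_\Omega i\{\opa s,\opa s\}\wedge\omega_\rho.
\]
Combining with the inequality from Theorem~\ref{integral} and rearranging yields
\[
\int_\Omega |s|^2\left(-i\Theta(L)\wedge\omega_\rho - i\pa\opa\omega_\rho\right) \leq \int_\Omega i\{\opa s,\opa s\}\wedge\omega_\rho.
\]
It then remains to estimate the right-hand side: $|i\{\opa s,\opa s\}\wedge\omega_\rho| \leq C|\opa s|^2 |\omega_\rho| \, dV_\omega$ pointwise by Cauchy--Schwarz on forms, where the constant $4$ appears from $dV_\omega = \omega\wedge\omega/8$ and the comass of $\omega_\rho$ relative to $\omega$ (a routine linear-algebra estimate on the $4$-dimensional fibres).

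Wait---the claimed inequality carries an extra error term $|\opa s|\cdot|s|\cdot|\pa\omega_\rho|$, which does not appear above. This means the Weitzenb\"ock identity must be applied with more care: $\omega_\rho$ is \emph{not} closed ($\pa\omega_\rho \neq 0$), so when integrating by parts---i.e. when moving a derivative off $|s|^2$ via Stokes, or equivalently when expanding $i\pa\opa|s|^2 \wedge \omega_\rho$ and handling the cross terms between $\pa|s|^2 = \{D's,s\}+\{s,\opa s\}$ and $\opa\omega_\rho$---one picks up boundary-free bulk terms of the form $\opa|s|^2 \wedge \pa\omega_\rho$, controlled by $(|D's|+|\opa s|)\cdot|s|\cdot|\pa\omega_\rho|$. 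The cleanest route is therefore to expand $d\{|s|^2 \cdot(\text{something involving }\omega_\rho)\}$ directly, track every term, use Stokes once (the boundary contribution vanishing because $\omega_\rho|M = 0$, as already noted in the proof of Theorem~\ref{integral}), and only then invoke positivity of the $\{D's,D's\}$-term; a Cauchy--Schwarz with a free parameter $\varepsilon$ absorbs the $|D's|\cdot|s|\cdot|\pa\omega_\rho|$ contribution partly into the nonnegative $|D's|^2$ term and partly into the error, leaving exactly the stated form. \textbf{The main obstacle} I anticipate is precisely this bookkeeping of the non-closedness of $\omega_\rho$: getting the integration by parts and the Cauchy--Schwarz splitting to produce exactly the two error terms $|\opa s|^2|\omega_\rho|$ and $|\opa s|\,|s|\,|\pa\omega_\rho|$ with the constant $4$, rather than a messier expression, and verifying that no $|D's|$ or $|D''\omega_\rho|$ terms survive on the right-hand side.
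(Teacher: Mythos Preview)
Your overall strategy---apply Theorem~\ref{integral} to $g=|s|^2$, use $\ell_\rho\le 0$, and expand $i\pa\opa|s|^2$---is exactly the paper's. The gap is in the Weitzenb\"ock step. The identity you wrote,
\[
i\pa\opa\{s,s\} = i\{D's,D's\} - i\{\opa s,\opa s\} - i\{\Theta(L)s,s\},
\]
is incomplete: it is missing the second-order cross term $2\Re\, i\{D'\opa s, s\}$ (in local terms, the $(\pa\opa f - \pa\psi\wedge\opa f)\ol f\, e^{-\psi}$ contribution). That cross term, not the non-closedness of $\omega_\rho$ per se, is the source of the $|\opa s|\,|s|\,|\pa\omega_\rho|$ error.

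Your proposed fix---absorbing a putative $|D's|\cdot|s|\cdot|\pa\omega_\rho|$ term into the nonnegative $|D's|^2$ block via Cauchy--Schwarz---is not what is needed, and in fact no $|D's|$ term ever appears on the right. The paper instead handles the cross term directly: using compatibility \eqref{compatible} one has $\{D'\opa s,s\}=\{\opa s,Ds\}+d\{\opa s,s\}$; wedging with $\omega_\rho$, bidegree forces $\{\opa s,Ds\}\wedge\omega_\rho=\{\opa s,\opa s\}\wedge\omega_\rho$, and Stokes on the exact piece (the boundary term vanishes since $\omega_\rho|M=0$) leaves $\{\opa s,s\}\wedge\pa\omega_\rho$. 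This yields
\[
\int_\Omega i\pa\opa|s|^2\wedge\omega_\rho \ge \int_\Omega -|s|^2 i\Theta(L)\wedge\omega_\rho - 2\Re\int_\Omega\bigl(i\{\opa s,\opa s\}\wedge\omega_\rho - i\{\opa s,s\}\wedge\pa\omega_\rho\bigr),
\]
after which the pointwise bounds (with constant $4$ coming from $dV_\omega=-\tfrac12\omega^1\wedge\ol\omega^1\wedge\omega^2\wedge\ol\omega^2$) give exactly the two stated error terms. So the missing idea is this algebraic rewriting of $\{D'\opa s,s\}$ via \eqref{compatible} and the bidegree observation, not a Cauchy--Schwarz balancing of $D's$ against itself.
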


\begin{proof}
Let $s\in\mathcal{C}^\infty(\ol \Omega,L)$. We will apply Theorem \ref{integral} with $g= \vert s\vert^2$. \\

First recall that in a local holomorphic frame $e$ one has
$\vert s\vert^2 = \vert f \vert^2 e^{-\psi}$, $D^\prime = \pa -\pa\psi\wedge\cdot$ and $i\Theta(L) = i\pa\opa\psi$
where we denote $s= f \otimes e$ and $|e|^2 = e^{-\psi}$.  

A straightforward computation shows that
\begin{eqnarray*}
i\pa\opa\vert s\vert^2 & = & i\big( \ol{\opa f}\wedge\opa f + D^\prime f \wedge\ol{D^\prime f}- \pa\opa\psi \vert f\vert^2 \big) e^{-\psi}\\
& & +i \big( (\pa\opa f -\pa\psi\wedge\opa f)\ol{f} + f(\pa\opa\ol{f} -\pa\ol{f}\wedge\opa\psi)\big) e^{-\psi}.
\end{eqnarray*}
Hence we obtain
\begin{eqnarray} \label{1}
\int_\Omega i\pa\opa\vert s\vert^2\wedge\omega_\rho & = & \int_\Omega ( i\ol{\opa s}\wedge\opa s +  iD^\prime s\wedge\ol{D^\prime s} -\vert s\vert^2 i\Theta(L) ) \wedge\omega_\rho\\
& &  +2\mathrm{Re} \int_\Omega i\lbrace D^\prime\opa s,s\rbrace \wedge\omega_\rho \nonumber\\
& \geq & \int_\Omega -\vert s\vert^2 i\Theta(L)  \wedge\omega_\rho
+2\mathrm{Re} \int_\Omega i\lbrace D^\prime\opa s,s\rbrace \wedge\omega_\rho.\nonumber
\end{eqnarray}

Using (\ref{compatible}) we have
$$\lbrace D^\prime \opa s,s\rbrace = \lbrace D\opa s,s\rbrace = \lbrace \opa s, Ds\rbrace + d\lbrace\opa s,s\rbrace .$$
For bidegree reasons we have $\lbrace \opa s, Ds\rbrace\wedge\omega_\rho = \lbrace\opa s,\opa s\rbrace\wedge\omega_\rho$.
Therefore we obtain
\[
\int_\Omega i\lbrace D^\prime\opa s,s\rbrace \wedge\omega_\rho 
= \int_\Omega i\lbrace \opa s,\opa s\rbrace\wedge\omega_\rho + \int_\Omega id\lbrace\opa s,s\rbrace\wedge\omega_\rho,
\]
and the second term is 
\begin{eqnarray*}
\int_\Omega id\lbrace\opa s,s\rbrace\wedge\omega_\rho
& = & \int_\Omega id\big(\lbrace\opa s,s\rbrace\wedge\omega_\rho\big) +\int_\Omega i\lbrace \opa s,s\rbrace\wedge\pa\omega_\rho\\
& = & \int_{M} i\lbrace\opa s,s\rbrace\wedge\omega_\rho   +\int_\Omega i\lbrace \opa s,s\rbrace\wedge\pa\omega_\rho\\
& = & \int_\Omega i\lbrace \opa s,s\rbrace\wedge\pa\omega_\rho,
\end{eqnarray*}
where the last equality holds since $\omega_\rho \mid M = 0$. Combining this with (\ref{1}) we have
\begin{eqnarray*} 
\int_\Omega i\pa\opa\vert s\vert^2\wedge\omega_\rho 
& \geq & \int_\Omega -\vert s\vert^2 i\Theta(L)  \wedge\omega_\rho
-2\mathrm{Re} \int_\Omega \big( i\lbrace \opa s,\opa s\rbrace\wedge\omega_\rho - i\lbrace \opa s,s\rbrace\wedge\pa\omega_\rho\big) \nonumber\\
& \geq & \int_\Omega -\vert s\vert^2 i\Theta(L)  \wedge\omega_\rho - 4 \int_\Omega\vert\opa s\vert^2 \cdot\vert\omega_\rho\vert dV_\omega - 4\int_\Omega \vert\opa s\vert\cdot\vert s\vert \cdot\vert\pa\omega_\rho\vert dV_\omega,\nonumber\\
\end{eqnarray*}
where the last inequality holds since $dV_\omega = -\frac{1}{2} \omega^1\wedge\ol\omega^1\wedge\omega^2\wedge\ol\omega^2$.

Together with Theorem \ref{integral} we then get from our assumption $\ell_\rho \leq 0$
\begin{eqnarray*}
0 & \geq & \int_{M} \vert s\vert^2 \ell_\rho 4dv_{M} \\
 & = & \int_\Omega i\pa\opa \vert s\vert^2\wedge\omega_\rho -\int_\Omega \vert s\vert^2 i\pa\opa\omega_\rho\\
 & \geq & \int_\Omega -\vert s\vert^2 i\Theta(L)  \wedge\omega_\rho -4 \int_\Omega\vert\opa s\vert^2 \cdot \vert\omega_\rho\vert dV_\omega\\
 & & -4\int_\Omega \vert\opa s\vert\cdot\vert s\vert \cdot\vert\pa\omega_\rho\vert dV_\omega
-\int_\Omega \vert s\vert^2 i\pa\opa\omega_\rho.
\end{eqnarray*}
This proves the proposition.
\end{proof}

%%%%%%%%%%%%%%%%%%%%%%%%%%%%%%%%%%%%%%%%%%%%%%%%%%%%%%%%%%%%%%%%%%%%
\section{Finiteness of the space of holomorphic sections over Levi-flat domains}
\label{sect:finiteness}

In this section, we will apply the estimate of Proposition \ref{sect} to negative holomorphic line bundles 
$L =\mathcal{O}(-m)$, $m > 0$, over a domain $\Omega$ with smooth Levi-flat boundary $M$ in $\C\PP^2$. 
Here, of course, we equip $\C\PP^2$ with the Fubini-Study metric $g_{FS}$
and $L=\mathcal{O}(-m)$ with the hermitian metric induced from $g_{FS}$.

We obtain the following

\begin{Proposition}
Let $\Omega \subset\C\PP^2$ be a domain with $\mathcal{C}^2$-smooth Levi-flat boundary $M$ with $\mathrm{Ric}^M(\xi, \xi) > 2-2m$. 
Then the space $L^2_{0,0}(\Omega,\mathcal{O}(-m)) \cap \Ker \opa$ is finite dimensional.
\end{Proposition}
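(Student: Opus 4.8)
The plan is to feed the negative line bundle $L=\mathcal{O}(-m)$ into the a priori estimate of Proposition \ref{sect}, to translate the hypothesis on $\Ric^M(\xi,\xi)$ via \S\ref{sect:adjunction} into strict positivity near $M$ of the curvature $(2,2)$-form occurring there, to deduce that the $L^2$-norm of a holomorphic section is controlled by its $L^2$-norm on a fixed compact subset of $\Omega$, and then to invoke the usual Montel--Riesz finiteness argument.

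First I would evaluate the $(2,2)$-form $\Phi:=-i\Theta(L)\wedge\omega_\delta-i\pa\opa\omega_\delta$ along $M$, where $\delta$ is the signed boundary distance function and $\omega_\delta=2i\pa\delta\wedge\opa\delta$. For $L=\mathcal{O}(-m)$ with the metric induced by $g_{FS}$ one has $-i\Theta(L)=m\omega$, and since $|d\delta|_g\equiv1$ near $M$ the form $\omega\wedge\omega_\delta$ is a fixed positive multiple of $dV_\omega$. One also has $i\pa\opa\omega_\delta=2(\pa\opa\delta)^2$ (the apparent third-order terms cancel, so $\Phi$ is continuous), and inserting the description of $\pa\opa\delta$ from Lemma \ref{alpha} in an orthonormal coframe adapted to $M$ (with $\omega^2=\sqrt2\,\pa\delta$ on $M$) shows that $(\pa\opa\delta)^2$ is, along $M$, a fixed positive multiple of $(i\alpha\wedge\ol\alpha/\omega)\,dV_\omega$. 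Keeping track of the convention-dependent constants, $\Phi|_M$ is therefore a positive multiple of $\bigl(m-2\,i\alpha\wedge\ol\alpha/\omega\bigr)dV_\omega$, which by Proposition \ref{local} and the value $H(\sigma_T,\sigma_N)=2$ for $(\C\PP^2,\omega_{FS})$ equals a positive multiple of $\bigl(\Ric^M(\xi,\xi)+2m-2\bigr)dV_\omega$. Thus the hypothesis $\Ric^M(\xi,\xi)>2-2m$ is exactly the statement that the continuous density $\phi:=\Phi/dV_\omega$ is strictly positive on $M$; hence $\phi\geq c_0>0$ on an open neighbourhood $U$ of $M$ in $\ol\Omega$, and I fix the compact set $K:=\ol\Omega\setminus U\Subset\Omega$ and $C_0:=\sup_K|\phi|$.

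Next I would run the estimate for an arbitrary $s$ in $\mathcal{H}:=L^2_{0,0}(\Omega,\mathcal{O}(-m))\cap\Ker\opa$. Such $s$ is holomorphic, hence smooth, on $\Omega$ but need not be smooth up to $M$, so instead of quoting Proposition \ref{sect} directly I would apply Theorem \ref{integral} with $g=|s|^2$ on the exhausting subdomains $\Omega_\varepsilon:=\{\delta<-\varepsilon\}$ — whose boundary $M_\varepsilon:=\{\delta=-\varepsilon\}$ is smooth for a.e.\ $\varepsilon$ and has defining function $\delta+\varepsilon$ with $\omega_{\delta+\varepsilon}=\omega_\delta$ — and combine it with the curvature manipulation from the proof of Proposition \ref{sect} (using $\opa s=0$ and $iD^\prime s\wedge\ol{D^\prime s}\wedge\omega_\delta\geq0$). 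This yields, for a.e.\ small $\varepsilon$,
\[
\int_{\Omega_\varepsilon}|s|^2\phi\,dV_\omega\ \leq\ 4\int_{M_\varepsilon}|s|^2\,\ell_{\delta+\varepsilon}\,dv_{M_\varepsilon}.
\]
The left side converges to $\int_\Omega|s|^2\phi\,dV_\omega$ by dominated convergence. On the right, $\ell_{\delta+\varepsilon}=O(\varepsilon)$ because $M$ is Levi-flat (the Levi form of the equidistant hypersurfaces vanishes to first order at $\varepsilon=0$), while the coarea formula and $|d\delta|_g\equiv1$ give $\int_0^{\varepsilon_0}\!\bigl(\int_{M_\varepsilon}|s|^2\,dv_{M_\varepsilon}\bigr)d\varepsilon=\int_{\{-\varepsilon_0<\delta<0\}}|s|^2\,dV_\omega<\infty$, hence $\liminf_{\varepsilon\to0}\varepsilon\int_{M_\varepsilon}|s|^2\,dv_{M_\varepsilon}=0$ and the boundary integral tends to $0$ along a suitable sequence $\varepsilon_j\to0$. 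Therefore $\int_\Omega|s|^2\phi\,dV_\omega\leq0$, and splitting at $K$ gives $c_0\int_{\Omega\setminus K}|s|^2\,dV_\omega\leq-\int_K|s|^2\phi\,dV_\omega\leq C_0\int_K|s|^2\,dV_\omega$, so that $\|s\|^2_{L^2(\Omega)}\leq(1+C_0/c_0)\,\|s\|^2_{L^2(K)}$ for every $s\in\mathcal{H}$.

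Finally I would conclude by the standard argument: $\mathcal{H}$ is a closed subspace of the Hilbert space $L^2_{0,0}(\Omega,\mathcal{O}(-m))$ (an $L^2$-limit of holomorphic sections is holomorphic, by interior mean-value estimates), and for any open $W$ with $K\Subset W\Subset\Omega$ the sub-mean-value property and Cauchy estimates bound the $\mathcal{C}^1(K)$-norm of every $s\in\mathcal{H}$ with $\|s\|_{L^2(\Omega)}\leq1$, so by Arzel\`a--Ascoli the unit ball of $\mathcal{H}$ is precompact in $L^2(K)$; combined with the inequality $\|s\|_{L^2(\Omega)}\leq(1+C_0/c_0)^{1/2}\|s\|_{L^2(K)}$ this forces the unit ball of $\mathcal{H}$ to be precompact in $\mathcal{H}$, whence $\dim\mathcal{H}<\infty$ by Riesz's lemma. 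I expect the main obstacle to be the passage to the limit over $\Omega_\varepsilon$ — that is, controlling the boundary contribution $\int_{M_\varepsilon}|s|^2\,\ell_{\delta+\varepsilon}\,dv_{M_\varepsilon}$ for sections that are only $L^2$ (not boundary-smooth); this is precisely where the Levi-flatness of $M$, through the bound $\ell_{\delta+\varepsilon}=O(\varepsilon)$, and the coarea selection of the sequence $\varepsilon_j$ are essential.
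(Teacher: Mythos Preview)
Your overall strategy coincides with the paper's: the evaluation of $\Phi|_M$ via Lemma \ref{alpha} and Proposition \ref{local}, the use of the integral identity (Theorem \ref{integral}) in the spirit of Proposition \ref{sect}, and the Montel--Riesz finiteness at the end are exactly what the paper does. The one substantive divergence is how you pass to the compact-control inequality $\|s\|^2_{L^2(\Omega)}\leq C\|s\|^2_{L^2(K)}$ for $L^2$ holomorphic sections. The paper establishes $\|s\|^2\leq C\|\opa s\|^2+C\int_K|s|^2$ for all $s\in\mathcal{C}^\infty(\ol\Omega,\mathcal{O}(-m))$ --- working on $\Omega$ itself, where the boundary is honestly Levi-flat so the boundary term in Proposition \ref{sect} vanishes identically --- and then invokes the standard graph-norm density to reach $L^2\cap\Ker\opa$.

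Your exhaustion route has a gap at the claim $\ell_{\delta+\varepsilon}=O(\varepsilon)$. With $M$ only $\mathcal{C}^2$, the signed distance $\delta$ is $\mathcal{C}^2$ and $\pa\opa\delta$ is merely continuous; Levi-flatness says $\ell$ vanishes \emph{on} $M$ but gives no rate, so you only get $\ell_{\delta+\varepsilon}=o(1)$ uniformly. First-order vanishing would require $\pa\opa\delta$ to be Lipschitz in the normal direction, i.e.\ $M$ of class $\mathcal{C}^{2,1}$ or $\mathcal{C}^3$. And $o(1)$ is not enough for your coarea selection: integrability of $F(\varepsilon):=\int_{M_\varepsilon}|s|^2\,dv_{M_\varepsilon}$ near $0$ does not force $\liminf a(\varepsilon)F(\varepsilon)=0$ for an arbitrary modulus $a(\varepsilon)\to 0$ (take $F(\varepsilon)=\varepsilon^{-1}\log^{-2}(1/\varepsilon)$ and $a(\varepsilon)=\log^{-1}(1/\varepsilon)$). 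Hence the boundary term $\int_{M_\varepsilon}|s|^2\ell\,dv_{M_\varepsilon}$ need not tend to $0$ along any subsequence, and the inequality $\int_\Omega|s|^2\phi\,dV\leq 0$ is not established at the stated $\mathcal{C}^2$ regularity. The paper's route sidesteps this because on $M$ itself $\ell_\rho\equiv 0$ exactly, no rate needed.
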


\begin{proof}
Choose a defining function $\rho$ of $M$ so that it agrees with the signed boundary distance function with respect to the Fubini--Study metric near $M$.  
The proof is based on the observation that there exists $\varepsilon > 0$ such that 
\begin{equation}  \label{est}
-i\Theta(\mathcal{O}(-m))\wedge\omega_\rho -i\pa\opa\omega_\rho \geq \varepsilon dV_{\omega_{FS}}
\end{equation} 
holds on a neighborhood of $M$ if and only if $\mathrm{Ric}^M(\xi, \xi) > 2-2m$. 

Let us now present the details.
First note that the first term is 
\[
-i\Theta(\mathcal{O}(-m))\wedge\omega_\rho = m \omega_{FS}\wedge\omega_\rho.
\]
From Lemma \ref{alpha}, the second term is 
\begin{align*}
-i\pa\opa\omega_\rho 
&= -i\pa\opa(2i\pa\rho\wedge\opa\rho) = 2i\pa\opa\rho\wedge i\pa\opa\rho \\
&= 2i(\alpha \wedge \opa \rho + \pa\rho \wedge \ol{\alpha})\wedge i(\alpha \wedge \opa \rho + \pa\rho \wedge \ol{\alpha}) \\
&= -2i \alpha \wedge \ol{\alpha} \wedge \omega_\rho
\end{align*}
on the points in $M$. Hence, it suffices to compare $m \omega_{FS}$ with $2i\alpha\wedge\ol{\alpha}$ on 
the holomorphic tangent space $T^{1,0}M$ as quadratic forms. 
Using a distinguished coordinate $\varphi(\zeta,t)$ around a point $p \in M$, we have
\[
m\omega_{FS}\left(\base{\zeta}, \base{\ol{\zeta}}\right) = m
\]
and 
\[
2 i\alpha \wedge \ol{\alpha}\left(\base{\zeta}, \base{\ol{\zeta}}\right)
= \frac{1}{2}(2 - \Ric^M(\xi, \xi))
\]
from Proposition \ref{local} and our normalization of the Fubini--Study metric.
Therefore, 
\[
m > \frac{1}{2}(2 - \Ric^M(\xi, \xi)) \iff \Ric^M(\xi, \xi) > 2-2m
\]
confirms the observation $(5.1)$. \\

From (\ref{est}), Proposition \ref{sect} implies that 
if $\Omega^\prime$ is a sufficiently large, relatively open subset of $\Omega$, 
then for $s\in \mathcal{C}^\infty_c(\ol \Omega\setminus \ol \Omega^\prime,\mathcal{O}(-m))$ we have
\begin{eqnarray*}  \label{3}
 \varepsilon \Vert s\Vert^2 & \leq &  4\int_\Omega( \vert\opa s\vert^2\cdot\vert\omega_\rho\vert +\vert \opa s\vert\cdot\vert s\vert\cdot\vert\pa\omega_\rho \vert) dV_{\omega_{FS}} \nonumber\\
& \leq & 4\int_\Omega( \vert\opa s\vert^2\cdot\vert\omega_\rho\vert + \frac{\delta}{2}\vert s\vert^2 + \frac{1}{2\delta} \vert \opa s\vert^2 \cdot\vert\pa\omega_\rho \vert^2) dV_{\omega_{FS}}
\end{eqnarray*} 
for all $\delta > 0$. But this immediately implies that there exists a constant $C_0> 0$ such that 
\begin{equation}  \label{4}
\Vert s\Vert^2 \leq C_0 \Vert\opa s\Vert^2\qquad \mathrm{for\ all}\ s\in \mathcal{C}^\infty_c(\ol \Omega\setminus \ol \Omega^\prime,\mathcal{O}(-m)).
\end{equation}

The estimate (\ref{4}) implies that that there exists a constant $C> 0$ such that 
\begin{equation}   \label{5}
\Vert s\Vert^2 \leq C\Vert\opa s\Vert^2 + C\int_K \vert s\vert^2 dV_{\omega_{FS}}
\qquad \mathrm{for\ all}\ s\in \mathcal{C}^\infty_c(\ol \Omega,\mathcal{O}(-m)),
\end{equation}
where $K \subset \Omega$ is any compact containing $\Omega^\prime$ in its interior. Indeed, let $\chi$ be a smooth function on $\C\PP^2$, $0\leq\chi\leq 1$, which vanishes in a neighborhood of $\ol{\Omega^\prime}$ and equals $1$ in the complement of $K$. 
Applying (\ref{4}) to $\chi s$ we get
\begin{eqnarray*}
\Vert s\Vert^2 & \leq & 2\Vert \chi s\Vert^2 + 2\Vert (1-\chi)s\Vert^2 \\
& \leq & 2C_0\Vert\opa(\chi s)\Vert^2 + 2\int_K \vert s\vert^2 dV_{\omega_{FS}} \\
& \leq & 4C_0 \Vert \opa s\Vert^2 + 4C_0 \Vert s\opa\chi \Vert^2 + 2\int_K \vert s\vert^2 dV_{\omega_{FS}}
\end{eqnarray*}
from which (\ref{5}) follows. It is standard to deduce from estimate (\ref{5}) the finite dimensionality of $L^2_{0,0}(\Omega,\mathcal{O}(-m))\cap\mathrm{Ker}\opa$. 

\end{proof}

Considering the canonical bundle $L = K_{\C\PP^2} = \mathcal{O}(-3)$, i.e., $m = 3$, 
we have the following finiteness result, which enables us to prove our result in the next section.

\begin{Corollary}
\label{finite}  
Let $\Omega \subset\C\PP^2$ be a domain with $\mathcal{C}^2$-smooth Levi-flat boundary $M$ with $\mathrm{Ric}^M(\xi, \xi) > -4$. 
Then the space $L^2_{0,0}(\Omega,\mathcal{O}(-3)) \cap \Ker \opa$ is finite dimensional.
\end{Corollary}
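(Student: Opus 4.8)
The plan is to obtain Corollary \ref{finite} as the immediate specialization of the preceding Proposition to the canonical bundle. Recall that on $\C\PP^2$ we have the isomorphism $K_{\C\PP^2} \cong \mathcal{O}(-3)$, so setting $m = 3$ in the Proposition is legitimate. First I would check that the hypothesis matches: the Proposition requires $\Ric^M(\xi,\xi) > 2 - 2m$, and with $m = 3$ this reads $\Ric^M(\xi,\xi) > 2 - 6 = -4$, which is exactly the hypothesis of the Corollary. Thus no new geometric input is needed; the adjunction-type identity of Proposition \ref{local} has already been absorbed into the Proposition's proof via the comparison of $m\,\omega_{FS}$ with $2i\alpha\wedge\ol\alpha$ on $T^{1,0}M$.

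Second, I would note that the conclusion also transports verbatim: the Proposition gives finite dimensionality of $L^2_{0,0}(\Omega,\mathcal{O}(-m))\cap\Ker\opa$, and for $m = 3$ this is $L^2_{0,0}(\Omega,\mathcal{O}(-3))\cap\Ker\opa$, which is the space named in the Corollary. One could additionally remark that via $\mathcal{O}(-3)\cong K_{\C\PP^2}$ this is naturally identified with the space of $L^2$ holomorphic $2$-forms on $\Omega$, which is the object that will be contradicted in \S\ref{sect:infiniteness}; but strictly speaking the statement as written needs nothing beyond the substitution. The hermitian metric on $\mathcal{O}(-3)$ induced from $g_{FS}$ corresponds, under the adjunction identification, to the metric on $K_{\C\PP^2}$ induced from $g_{FS}$, so the two $L^2$ structures agree and the identification is isometric.

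There is essentially no obstacle here: the Corollary is a one-line consequence of the Proposition once the numerics $m = 3 \Leftrightarrow 2 - 2m = -4$ are observed. The only point deserving a word of care is the identification $K_{\C\PP^2} = \mathcal{O}(-3)$ together with the compatibility of the induced hermitian metrics, so that the finiteness statement phrased for $\mathcal{O}(-3)$ is genuinely the statement one wants about canonical sections; but this is standard and can be dispatched in a sentence. Accordingly the proof reads: \emph{Apply the Proposition with $m = 3$, using $K_{\C\PP^2}=\mathcal{O}(-3)$; the hypothesis $\Ric^M(\xi,\xi) > -4$ is precisely $\Ric^M(\xi,\xi) > 2 - 2\cdot 3$, and the conclusion is the asserted finite dimensionality.}
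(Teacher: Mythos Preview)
Your proposal is correct and matches the paper's approach exactly: the paper obtains Corollary \ref{finite} simply by specializing the preceding Proposition to $m=3$, noting $K_{\C\PP^2}=\mathcal{O}(-3)$ so that $2-2m=-4$. Your additional remarks on the compatibility of hermitian metrics are sound but go beyond what the paper spells out.
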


%%%%%%%%%%%%%%%%%%%%%%%%%%%%%%%%%%%%%%%%%%%%%%%%%%%%%%%%%%%%%%%%%%%%
\section{Proof of the main theorem}
\label{sect:infiniteness}

In this section, we will complete the proof of our Main theorem. The idea is to combine Corollary \ref{finite} with the following well known result:\\

{\it Let $X$ be an $n$-dimensional K\"ahler manifold. Assume that $X$ admits a complete K\"ahler metric and a bounded plurisubharmonic function which is strictly plurisubharmonic on a nonempty open subset of X. Then the space of $L^2$-holomorphic $n$-forms is infinite dimensional.}\\

This result is essentially contained in \cite{H} or \cite{Dem}. In our paper, we will need this very general result for the special case of a pseudoconvex domain in $\C\PP^n$:

\begin{Proposition}
\label{separate} 
Let $\Omega\subset\C\PP^n$ be a domain with $\mathcal{C}^2$-smooth pseudoconvex boundary. 
Then  $\mathrm{dim} L^2_{0,0}(\Omega,\mathcal{O}(-n-1))\cap \mathrm{Ker}\opa = +\infty$.
\end{Proposition}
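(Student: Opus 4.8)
The plan is to realize $L^2$ holomorphic sections of $\mathcal{O}(-n-1)=K_{\C\PP^n}$ over $\Omega$ as $L^2$ holomorphic $n$-forms, and then exhibit infinitely many linearly independent ones by a Hörmander-type $L^2$ existence argument with a singular weight. Concretely, a section $s\in L^2_{0,0}(\Omega,\mathcal{O}(-n-1))\cap\Ker\opa$ corresponds, via $K_{\C\PP^n}\simeq \mathcal{O}(-n-1)$, to a holomorphic $n$-form on $\Omega$ whose $L^2$-norm with respect to the metric on $K_{\C\PP^n}$ induced by $g_{FS}$ is finite; this is precisely the condition $\int_\Omega s\wedge\ol s<\infty$, which is metric-independent. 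So it suffices to show the space of $L^2$ holomorphic $n$-forms on $\Omega$ is infinite dimensional.

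The key steps, in order, are as follows. First I would note that $\Omega$ carries a complete K\"ahler metric: since $M=\pa\Omega$ is $\mathcal{C}^2$-smooth and pseudoconvex, $-\log(-\delta)$ (with $\delta$ the signed boundary distance, or a suitable regularization $-\log(-\rho)$ for a defining function $\rho$ with $-\log(-\rho)$ plurisubharmonic near $M$) is plurisubharmonic near the boundary, and adding a large multiple of the Fubini--Study potential one obtains a complete K\"ahler metric on $\Omega$; moreover $\Omega$ inherits from $\C\PP^n$ a bounded strictly plurisubharmonic function on a nonempty open subset (the restriction of a local potential of $\omega_{FS}$, suitably cut off and bounded). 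Second, with these two ingredients in hand, I would invoke the standard $L^2$-theory: for the weight $\varphi_a(z) = (n+1)\log(1+|z|^2) + a\,\chi(z)\log|z-z_0|^2$ with $z_0\in\Omega$ and $\chi$ a cutoff near $z_0$, the curvature positivity coming from the strictly plurisubharmonic function lets one solve $\opa u = \opa(\text{cutoff}\cdot(\text{local frame element with prescribed jet at }z_0))$ with an $L^2_{\varphi_a}$ bound, using Demailly's or H\"ormander's theorem on the complete K\"ahler manifold $\Omega$. Third, letting the order of vanishing imposed at $z_0$ grow (equivalently letting $a$ and the jet order grow), one produces, for every $k$, an $L^2$ holomorphic $n$-form on $\Omega$ with prescribed nonzero $k$-jet at $z_0$ and vanishing lower-order obstruction; these are manifestly linearly independent as $k\to\infty$, giving infinite dimensionality. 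I would cite \cite{H}, \cite{Dem} (and \cite{Dem} again for the precise statement on complete K\"ahler manifolds with a bounded strictly plurisubharmonic function) for the analytic input rather than reproving it.

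The main obstacle is the construction of the complete K\"ahler metric and the verification that the relevant weights have the needed plurisubharmonicity near a merely $\mathcal{C}^2$-smooth pseudoconvex boundary: one must be careful that $-\log(-\rho)$ is genuinely plurisubharmonic up to the boundary, which for a $\mathcal{C}^2$ pseudoconvex hypersurface requires choosing $\rho$ with the Diederich--Forn\ae ss-type property or working with the signed distance function and controlling its complex Hessian. Once completeness and the bounded strictly plurisubharmonic function are secured on $\Omega$, the remainder is the textbook $L^2$ existence argument and poses no real difficulty; the whole proposition is ``well known'' precisely modulo this boundary regularity bookkeeping, so in the write-up I would either quote a reference that handles $\mathcal{C}^2$ pseudoconvex domains in K\"ahler manifolds directly or give a short lemma producing the complete K\"ahler metric.
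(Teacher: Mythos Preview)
Your proposal is correct and follows the same overall strategy as the paper: identify sections of $\mathcal{O}(-n-1)$ with holomorphic $n$-forms and use H\"ormander--Demailly $L^2$ estimates with singular weights to produce many of them. The execution differs in two places worth noting.

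First, where you worry about building a complete K\"ahler metric on $\Omega$ from $-\log(-\rho)$ and flag the $\mathcal{C}^2$ boundary regularity as the main obstacle, the paper sidesteps this entirely. It invokes Takeuchi's theorem that a pseudoconvex domain in $\C\PP^n$ is Stein (hence weakly pseudoconvex in Demailly's sense, with $(\Omega,\omega_{FS})$ already K\"ahler), and it gets the needed strictly plurisubharmonic weight from Ohsawa--Sibony: there is $\eta\in(0,1)$ with $-\delta^\eta$ strictly plurisubharmonic on $\Omega$. This is cleaner than your route because it requires no Diederich--Forn\ae ss bookkeeping and no complete-metric construction; the boundary regularity issue you correctly identify simply does not arise.

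Second, the paper separates values at $k$ distinct points $p_1,\dots,p_k$ using a weight with logarithmic poles of fixed order $n$ at each $p_j$, producing sections $h_j$ with $h_j(p_\ell)=\delta_{j\ell}$. You instead prescribe higher and higher jets at a single point. Both variants are standard and either works; the multi-point version is slightly more economical since the singularity order stays fixed and the linear independence is immediate.

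In short: same method, but the paper's choice of Takeuchi plus Ohsawa--Sibony for the analytic input neatly dissolves the obstacle you spend the last paragraph on.
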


Note that in $\C\PP^n$, holomorphic $n$-forms can be identified with holomorphic sections of $\mathcal{O}(-n-1)$. 
To make this paper self-contained, we include a full proof of this proposition.
We will use the $L^2$-estimates for $\opa$ in the refined version of Demailly \cite{Dem}:\\

{\it Let $(X,\omega)$ be a K\"ahler manifold of dimension $n$. Assume that $X$ is weakly pseudoconvex. Let $E$ be a hermitian holomorphic vector bundle over $X$, and let $\psi\in L^1_{\mathrm{loc}}$ be a weight function. Suppose that
$$i\Theta(E)+ i\pa\opa\psi\otimes\mathrm{Id}_E \geq \gamma\omega\otimes \mathrm{Id}_E$$
for some continuous positive function $\gamma$ on $X$. Then for any $(n,q)$-form $f$ with $L^2_{\mathrm{loc}}$ coefficients, $q\geq 1$, satisfying $\opa f =0$ and $\int_\Omega \gamma^{-1}\vert f\vert^2 e^{-\psi} dV_\omega < +\infty,$ there exists $u\in L^2_{n, q-1}(X,E)$ such that $\opa u = f$ and 
$$\int_\Omega \vert u\vert^2 e^{-\psi} dV_\omega \leq \frac{1}{q} \int_\Omega \gamma^{-1}\vert f\vert^2 e^{-\psi} dV_\omega.$$
}

\begin{proof}[Proof of Proposition \ref{separate}]
It suffices to show that for every $k\in\N$ we have $\dim L^2_{0,0}(\Omega,\mathcal{O}(-n-1))\cap\mathrm{Ker}\opa \geq k$.\\

Choose $k$ distinct points $p_1,\ldots p_k \in\Omega$. For each $j=1,\ldots,k$, we choose a local holomorphic coordinate system $(z_1^j,\ldots, z^j_n)$ around $p_j$. In such a coordinate system, we identify $p_j$ with $z^j= (0,\ldots,0)$, and we also assume that $B^j_{4\varepsilon} = \lbrace z\mid \vert z^j\vert < 4\varepsilon\rbrace\subset\Omega$ are mutually disjoint by taking enough small $\varepsilon > 0$. 
Let us further choose a smooth function $\varphi$ on $\Omega\setminus\bigcup_{j=1}^k\lbrace p_j\rbrace$ such that $\varphi \equiv 0$ outside $\bigcup_{j=1}^k B^j_\varepsilon$ and 
$\varphi (z^j) = n\log\vert z^j\vert^2$ if $\vert z^j\vert < \frac{\varepsilon}{2}$. We then have $i\pa\opa\varphi \geq - C_1 \cdot\omega_{FS}$ for some constant $C_1 > 0$.\\

Next, we will use results of Takeuchi \cite{T} and Ohsawa and Sibony \cite{OS}, namely 
if $\Omega\subset\C\PP^n$ is a pseudoconvex domain with $\mathcal{C}^2$-smooth boundary, then there exists some $\eta\in (0,1)$ such that $\psi = -\delta^\eta$ is strictly plurisubharmonic in $\Omega$; here $\delta$ is the (unsigned) boundary distance function with respect to $\omega_{FS}$. 
But this implies that for some constant $C> 0$, we have $i\pa\opa(\varphi + C\psi) > 0$ in $\Omega$.\\

Finally, we choose  $\mathcal{C}^\infty$-smooth functions $\chi_j$, $j=1,\ldots, k$ with compact support in $\Omega$ such that $\chi_j (z)=1$ if $z\in B^j_{\varepsilon/2}$ and $\chi(z)=0$ if $z\in\Omega\setminus B^j_\varepsilon$.\\

Observe that the $(0,1)$-forms $f_j = \opa (\chi_j + \sum_{j\not= \ell}\chi_\ell z_1^\ell)$ have compact support in  $\bigcup_{j=1}^k \ol{B^j_{2\varepsilon}\setminus B^j_{\varepsilon/2}}$.\\

Now a $(0,1)$-form with values in $\mathcal{O}(-n-1)$ can be naturally identified with an $(n,1)$-form with values in $L= \Lambda^n T\C\PP^n \otimes\mathcal{O}(-n-1) \simeq\mathcal{O}(n+1)\otimes\mathcal{O}(-n-1)\simeq \C$. \\

From \cite{T} we know that every pseudoconvex domain $\Omega\subset\C\PP^n$ is Stein, hence $(\Omega,\omega_{FS})$ satisfies the assumptions of Demailly's $L^2$-existence result. Thus we get  functions $u_j$ satisfying $\opa u_j = f_j$  with the property
$$\int_\Omega \vert u_j\vert^2 e^{-(\varphi + C\psi)} dV_{\omega_{FS}} \leq \int_\Omega \gamma^{-1}\vert f_j\vert^2_{\omega_{FS}} e^{-(\varphi + C\psi)} dV_{\omega_{FS}} < + \infty,$$
where $\gamma >0$ is the smallest eigenvalue of $i\pa\opa(\varphi + C\psi)$ with respect to $\omega_{FS}$.\\

Notice that $e^{-\varphi} = \vert z^j\vert^{-2n}$ for all $z\in B^j_{\varepsilon/2}$ and that $e^{-C\varphi}$ is bounded from above and from below by constants on $\Omega$. Hence we must have that $u_j(p_\ell) =0$ for $\ell = 1,\ldots,k$. Furthermore, we have that $h_j = \chi_j + \sum_{j\not= \ell}\chi_\ell z_1^\ell -u_j $ satisfies $\opa h_j =0$ and 
$$\int_\Omega \vert h_j\vert^2 dV_{\omega_{FS}} < +\infty.$$
By our construction, $h_j(p_j) = 1-0 =1$, and for $\ell \not= j$, $h_j(p_\ell) = 0-0 =0$. So in particular, we have $h_j(p_\ell) = \delta_{j\ell}$.

Hence $\mathrm{dim} L^2_{0,0}(\Omega,\mathcal{O}(-n-1))\cap \mathrm{Ker}\opa \geq k$.
\end{proof}

We would like to remark that Proposition \ref{separate} gives an improvement for the unweighted case of \cite[Proposition  4.3]{HI}.\\

{\bf Proof of the Main theorem.} Assume by contradiction, that $M$ is an oriented $\mathcal{C}^2$-smooth closed Levi-flat real hypersurface in $\C\PP^2$ such that $\mathrm{Ric}^M(\xi,\xi) > -4$ everywhere along $M$. Then $M$ bounds a domain $\Omega$ and, using Corollary \ref{finite}, $L^2_{0,0}(\Omega,\mathcal{O}(-3))\cap\mathrm{Ker}\opa$ is finite dimensional. On the other hand, this space is infinite dimensional according to Proposition \ref{separate}. This contradiction completes the proof. \hfill$\square$\\

%%%%%%%%%%%%%%%%%%%%%%%%%%%%%%%%%%%%%%%%%%%%%%%%%%%%%%%%%%%%%%%%%%%%
\appendix
\section{A revisit to Takeuchi's inequality}
\label{sect:takeuchi}

In this appendix, we revisit Takeuchi's inequality in a special case, domains with Levi-flat boundary, 
and reveal an equality that is hidden behind Takeuchi's inequality. 
Based on this formula, we also observe that recent studies on the Diederich--Fornaess index \cite{AB}, \cite{FS} 
involve a general restriction on the totally real Ricci curvature of Levi-flat real hypersurfaces in K\"ahler surfaces.

Let us recall Takeuchi's inequality with the explicit constant.
\begin{Theorem}[\cite{T}, \cite{GW}]
Let $\Omega \varsubsetneqq \C\PP^n$ $(n \geq 1)$ be a proper pseudoconvex domain. 
Denote by $\delta$ the unsigned boundary distance function to $\pa\Omega$ with respect to 
the Fubini--Study metric $\omega_{FS}$. Then the inequality
\[
i\pa\opa(-\log \delta) \geq \frac{1}{3}\omega_{FS}
\]
holds in $\Omega$ in the sense of currents.
\end{Theorem}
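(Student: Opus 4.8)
The plan is to prove Takeuchi's inequality $i\pa\opa(-\log\delta)\geq \frac{1}{3}\omega_{FS}$ by leveraging the adjunction-type computation developed in \S\ref{sect:adjunction} together with the bisectional curvature formula for $(\C\PP^n,\omega_{FS})$, but specialized to the level hypersurfaces of $\delta$. First I would fix an interior point $q\in\Omega$ lying on the smooth locus of $\delta$ and work on the level set $M_c=\{\delta=c\}$ through $q$; since $\Omega$ is pseudoconvex, each such $M_c$ is pseudoconvex (from the inside), so its Levi form $\ell_\delta\leq 0$. Writing $\rho=-\log\delta$, a direct computation gives $i\pa\opa\rho = \frac{1}{\delta}\,i\pa\opa\delta + \frac{1}{\delta^2}\,i\pa\delta\wedge\opa\delta$, and because $|d\delta|_{g_{FS}}=1$ the second term equals $\frac{1}{2\delta^2}\omega_\delta$ in the notation of \S\ref{sect:integral}. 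The task is thus to bound $i\pa\opa\rho$ from below on the two $J$-invariant planes $\sigma_T = T^{1,0}M_c$ and $\sigma_N$, and also on the mixed directions, and to see that the worst eigenvalue is $\tfrac13$.

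Next I would handle the holomorphic tangential direction. On $\sigma_T$ one has $i\pa\opa\delta|_{\sigma_T}=\ell_\delta\leq 0$, so $i\pa\opa\rho$ restricted to $\sigma_T$ is $\frac{1}{\delta}\ell_\delta$, which is nonpositive — hence the curvature contribution we are trying to bound below must come from the interplay with the normal direction and from the Takeuchi--Greene--Wu trick of comparing with the curvature of $\C\PP^n$. The correct way to extract the constant $\tfrac13$ is the classical one: for a domain in $\C\PP^n$ with the Fubini--Study metric, the function $-\log\delta$ satisfies a differential inequality coming from the fact that the ambient holomorphic sectional curvature is bounded (our normalization gives $H(\sigma,\sigma)=4$ on a single plane, i.e. holomorphic sectional curvature $2$ after accounting for the factor, matching $i\Theta(\mathcal O(1))=\omega_{FS}$). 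Concretely, along a geodesic normal to $M_c$ parametrized by arclength, the second fundamental form of the level sets evolves by a Riccati equation whose ambient curvature term is governed by $\omega_{FS}$; integrating this Riccati comparison from the boundary inward, using $\ell_\delta\leq 0$ as the initial condition, yields the pointwise bound on $i\pa\opa(-\log\delta)$ in every direction. The emergent constant $1/3$ is exactly what the Riccati comparison with constant holomorphic sectional curvature produces, which is why the Greene--Wu reference \cite{GW} is the natural source.

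An alternative, more in the spirit of this paper, is to use Proposition \ref{local} applied to $M_c$: it gives $H(\sigma_T,\sigma_N)-\Ric^{M_c}(\xi,\xi)=4i\alpha\wedge\ol\alpha/\omega\geq 0$ with $\alpha$ induced from $\delta$, and then relate $\Ric^{M_c}(\xi,\xi)$ and the full Hessian $i\pa\opa(-\log\delta)$ through the identity $i\pa\opa\rho=\frac{1}{\delta}i\pa\opa\delta+\frac{1}{2\delta^2}\omega_\delta$ combined with the explicit value $H(\sigma_1,\sigma_2)=2(1+g(v_1,v_2)^2+g(v_1,Jv_2)^2)$ of the bisectional curvature, which on the pair $(\sigma_T,\sigma_N)$ with $\sigma_N=J\sigma_N$ and $\sigma_T\perp\sigma_N$ gives $H(\sigma_T,\sigma_N)=2$. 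Feeding in $\ell_\delta\leq 0$ and optimizing over the relative weight of $\omega_\delta$ versus $\omega_{FS}$ that can appear in $i\pa\opa(-\log\delta)$ is what pins the constant.

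The main obstacle I expect is organizing the direction-by-direction estimate cleanly: $i\pa\opa(-\log\delta)$ is a $(1,1)$-form and one must control it on the purely tangential, purely normal, and mixed complex directions simultaneously, and in each the sign of the contributions differs, so the inequality only closes after the Riccati/curvature comparison is invoked to convert the nonpositivity of $\ell_\delta$ into a definite positive lower bound with constant $\tfrac13$. Handling the non-smooth locus of $\delta$ (where the estimate must be interpreted in the sense of currents, as stated) requires the standard remark that $\delta$ is locally the minimum of smooth distance functions, so $-\log\delta$ is locally a minimum of smooth plurisubharmonic-type functions satisfying the bound, and the current inequality follows by taking minima. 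I would present the Riccati comparison as the crux and relegate the non-smooth locus to a one-line remark.
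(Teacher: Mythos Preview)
The paper does not give its own proof of this theorem: it is quoted from the literature with citations \cite{T} and \cite{GW} and used as a black box in the appendix. So there is no ``paper's proof'' to compare against; the relevant benchmark is the classical Riccati comparison argument of Greene--Wu, which your first paragraph gestures at but does not carry out.

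Your proposal, as it stands, has genuine gaps. First, the sign in your formula is wrong: from $\rho=-\log\delta$ one gets
\[
i\pa\opa\rho \;=\; -\tfrac{1}{\delta}\,i\pa\opa\delta \;+\; \tfrac{1}{\delta^2}\,i\pa\delta\wedge\opa\delta,
\]
so the tangential piece $-\tfrac{1}{\delta}\ell_\delta$ is \emph{nonnegative} once you know $\ell_\delta\le 0$, the opposite of what you wrote. Second, you assume without argument that the interior level sets $M_c=\{\delta=c\}$ are themselves pseudoconvex; in $\C\PP^n$ this is not a triviality (and in some formulations is essentially what Takeuchi's theorem provides), so invoking it at the outset is close to circular. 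Third, and most seriously, your ``alternative'' route via Proposition~\ref{local} cannot work as stated: that proposition is proved only for \emph{Levi-flat} real hypersurfaces in a K\"ahler \emph{surface}, whereas the level sets $M_c$ are generically strictly pseudoconvex and the theorem is stated for all $n\ge 1$. The identity $H(\sigma_T,\sigma_N)-\Ric^{M_c}(\xi,\xi)=4i\alpha\wedge\ol\alpha/\omega$ relies crucially on the vanishing of the Levi form (that is how the $g(A\xi,\xi)$ term in the Gauss equation drops out in the paper's proof), so it is not available for $M_c$.

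What would actually work is the Greene--Wu Hessian comparison you mention: along a unit-speed geodesic normal to $\pa\Omega$ the complex Hessian of $\delta$ obeys a matrix Riccati equation driven by the curvature operator of $\omega_{FS}$, and comparison with the constant-curvature model yields the explicit lower bound $\tfrac13\omega_{FS}$ for $i\pa\opa(-\log\delta)$. Your sketch names this mechanism but supplies none of the computation that produces the constant; to turn the proposal into a proof you would have to set up the Riccati ODE, input the holomorphic bisectional curvature bounds $2\le H(\sigma_1,\sigma_2)\le 4$ in your normalization, and solve the resulting comparison inequality. The non-smooth locus remark at the end is fine.
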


When $n=2$ and the boundary $\pa\Omega$ is a $\mathcal{C}^3$-smooth Levi-flat real hypersurface $M$, 
we can deduce an inequality along $M$. 
\begin{Corollary}
Let $M$ be an oriented $\mathcal{C}^3$-smooth Levi-flat real hypersurface without boundary in $\C\PP^2$. 
Consider the signed boundary distance function to $M$ with respect to $\omega_{FS}$ 
and induce from it a hermitian metric $h$ on the holomorphic normal bundle $N^{1,0}_M$.
Then, the Chern curvature of $h$ along $T^{1,0}M$ denoted by $\Theta(N^{1,0}_M)$ satisfies the inequality
\begin{equation}
\label{takeuchi-on-boundary}
i\Theta(N^{1,0}_M) \geq \frac{1}{3}\omega_{FS}
\end{equation}
as quadratic forms on $T^{1,0}M$. 
\end{Corollary}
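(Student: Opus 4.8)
The plan is to deduce the Corollary directly from Takeuchi's inequality by relating the Chern curvature of $N^{1,0}_M$ along $T^{1,0}M$ to the restriction of $i\pa\opa(-\log\delta)$ to the boundary. First I would recall from \S\ref{subsect:normal} and \S\ref{sect:adjunction} that the signed boundary distance function $\delta$ induces the hermitian metric $h^2=h_\delta^2$ on $N^{1,0}_M$ with local weight $(h_\delta)_\varphi=\pa\delta/\pa y_2$ in a distinguished parametrization $\varphi(\zeta,t)$; by definition the Chern curvature along $T^{1,0}M$ is the leafwise $(1,1)$-form $-i\pa\opa\log (h_\delta)_\varphi^2 = -2i\,\pa\opa\log(h_\delta)_\varphi$, and its value at $p$ on $(\pa_\zeta,\pa_{\ol\zeta})$ is $-2\,|\pa_\zeta\log(h_\delta)_\varphi|^2$ minus the leafwise Laplacian-type term coming from $\pa\opa$ in the $\zeta$ variable alone — actually, since $\log(h_\delta)_\varphi$ is merely $\mathcal{C}^1$ in $t$, the relevant quantity is the leafwise $\pa\opa$.

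The key step is the identification of this leafwise curvature with $i\pa\opa(-\log\delta)|_{T^{1,0}M}$. I would argue as follows: near $M$ write $\delta$ as a genuine $\mathcal{C}^3$ function (Gilbarg--Trudinger-type regularity of the distance function, which is $\mathcal{C}^3$ here since $M$ is $\mathcal{C}^3$), and compute $i\pa\opa(-\log\delta)$ restricted to a leaf of $\mathcal{F}$ as $\delta\to 0$. On a leaf, $\delta\equiv 0$, so the naive restriction is singular; instead one restricts to the boundary by taking a limit of the tangential part. Along $T^{1,0}M$ one has $\pa\delta|_{T^{1,0}M}=0$, hence $i\pa\opa(-\log\delta)|_{T^{1,0}M} = \tfrac{1}{\delta}\,i\pa\opa\delta|_{T^{1,0}M}$ plus a term $\tfrac{1}{\delta^2} i\pa\delta\wedge\opa\delta$ which vanishes on $T^{1,0}M$; and the normal derivative structure of $\delta$ (namely $\delta = (\text{distance})$, so along the normal geodesic $\delta$ grows linearly with unit speed) forces $\tfrac{1}{\delta}\,i\pa\opa\delta|_{T^{1,0}M}$ to converge, as one approaches $M$, precisely to $i\Theta(N^{1,0}_M)|_{T^{1,0}M}$. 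This last convergence is the content of the adjunction picture in the Remark after Proposition~\ref{local}: $(h_\delta)_\varphi = \pa\delta/\pa y_2$, and differentiating the eikonal-type behavior of $\delta$ in the leaf directions reproduces $-i\pa\opa\log(h_\delta)_\varphi^2$. Once this identification is in place, evaluating the current inequality $i\pa\opa(-\log\delta)\geq \tfrac13\omega_{FS}$ on vectors in $T^{1,0}M$ and passing to the boundary limit yields (\ref{takeuchi-on-boundary}) directly, since $\omega_{FS}$ is smooth up to $M$ and its restriction to $T^{1,0}M$ passes to the limit trivially.

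The main obstacle I anticipate is making the boundary limit of $i\pa\opa(-\log\delta)|_{T^{1,0}M}$ rigorous: $-\log\delta$ blows up at $M$, so one is extracting a finite limit from a divergent expression, and the inequality holds only in the sense of currents in the open domain $\Omega$, not pointwise up to the boundary a priori. The clean way around this is to test the current inequality against a positive test form supported near a point $p\in M$ and concentrated along the leaf through $p$, integrate, and use the $\mathcal{C}^3$-regularity of $\delta$ together with the explicit structure $\delta(\varphi(\zeta,t)) = t\cdot(h_\delta)_\varphi(\zeta,0) + O(t^2)$ (valid because $\varphi$ maps $\{t=0\}$ into $M$ and $\pa\delta/\pa t \neq 0$ there) to compute the limit of the tangential Hessian term by term. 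Alternatively — and this is probably cleanest for a $\mathcal{C}^3$ hypersurface — one notes that $\log\delta$ extends to a function that is $\mathcal{C}^1$ in the leaf directions after subtracting the singular part $\log t$, whose leafwise $\pa\opa$ is zero, so the leafwise $\pa\opa\log\delta$ coincides with $\pa\opa\log(h_\delta)_\varphi$; then the current inequality, restricted leafwise, literally \emph{is} the asserted inequality for $i\Theta(N^{1,0}_M)$. I would carry out this second route, invoking the $\mathcal{C}^3$ hypothesis precisely where the leafwise $\mathcal{C}^2$-regularity of $\log(h_\delta)_\varphi$ is needed for $\pa\opa$ to make classical sense.
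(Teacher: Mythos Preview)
Your approach is correct and is essentially the same as the paper's: the paper proves the corollary in a single sentence by ``taking a limit of Takeuchi's inequality toward the tangential directions of $M$'' and deferring the details of that limit to the proof of \cite[Proposition~3.3]{A}. Your second route---writing $\delta$ in a distinguished parametrization, peeling off the singular part so that the leafwise $\pa\opa$ of $-\log\delta$ becomes $-\pa\opa\log(h_\delta)_\varphi$, and then reading off the inequality---is exactly the mechanism behind that limiting argument.
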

\begin{proof}
By taking a limit of Takeuchi's inequality toward the tangential directions of $M$. 
See the proof of \cite[Proposition 3.3]{A}. 
\end{proof}

\begin{Remark}
One can see that the constant in (\ref{takeuchi-on-boundary}) can be improved to $1/2$
if one looks carefully at the proof of Takeuchi's inequality. See e.g. \cite{CS}.
\end{Remark}

Now we are going to derive an adjunction-type equality hidden in the inequality (\ref{takeuchi-on-boundary}). 
Let $M$ be an oriented $\mathcal{C}^3$-smooth Levi-flat real hypersurface without boundary 
in a K\"ahler surface $(X, J_X, g)$. We restrict $g$ on $M$ as before and use the same notation as in \S\ref{sect:prelim} and \S\ref{sect:adjunction}.
We shall consider an extrinsic curvature of the leaves of the Levi foliation $\mathcal{F}$ in $M$: 
we define for $p \in M$
\[
G_{\mathcal{F}/M}(p) := g(\nabla^M_\base{x_1} \xi, \base{x_1})g(\nabla^M_\base{y_1} \xi, \base{y_1}) - g(\nabla^M_\base{x_1} \xi, \base{y_1})^2 
\]
where $z_1 = x_1 + iy_1$ is the first coordinate of a distinguished parametrization 
around $p$, say $\varphi(\zeta,t): V \to M$ in $(z_1 = x_1 + iy_1, z_2)$.
This curvature $G_{\mathcal{F}/M}$ may be referred to as the \emph{Gauss--Kronecker curvature} or the \emph{Lipschitz--Killing curvature}
of the leaves of $\mathcal{F}$. They are usually defined for real hypersurfaces or real submanifolds in the Euclidean spaces 
by the same formula, namely, the determinant of the shape operator.  

We used the shape operator of the leaves of $\mathcal{F}$ in $M$ to define $G_{\mathcal{F}/M}$. 
Instead of it, one may use the shape operator $A$ of $M$ in $X$ because of the K\"ahlerity of $g$: 
we have at $p \in M$
\begin{align}
\label{extrinsic-shape}
G_{\mathcal{F}/M}(p)
&= g(\nabla_\base{x_1} J_X\nu, \base{x_1}) g(\nabla_\base{y_1} J_X\nu, \base{y_1}) \\
&\quad -  g(\nabla_\base{x_1} J_X\nu, \base{y_1}) g(\nabla_\base{y_1} J_X\nu, \base{x_1}) \nonumber\\
&= g(\nabla_\base{x_1} \nu, J_X\base{x_1}) g(\nabla_\base{y_1} \nu, J_X\base{y_1}) \nonumber\\
&\quad -  g(\nabla_\base{x_1} \nu, J_X\base{y_1}) g(\nabla_\base{y_1} \nu, J_X\base{x_1}) \nonumber\\
&= -g(A \base{x_1}, \base{y_1})^2 + g(A \base{x_1}, \base{x_1}) g(A \base{y_1}, \base{y_1}). \nonumber 
\end{align}
Hence $G_{\mathcal{F}/M} \leq 0$ because the leaves of $\mathcal{F}$ are minimal in $X$ and 
the trace of the shape operator $A$ restricted on $\sigma_T$ is zero.

\begin{Proposition}
\label{takeuchi}
The following equality holds on $M$:
\[
H\left(\sigma_T, \sigma_N \right) - 2G_{\mathcal{F}/M} =  4 i\Theta(N^{1,0}_M)/\omega
\]
where the ratio of $i\Theta(N^{1,0}_M)$ and $\omega$ is taken as quadratic forms on $T^{1,0}M$.
\end{Proposition}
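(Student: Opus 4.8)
The plan is to mimic the proof of Proposition~\ref{local} as closely as possible, replacing the totally real Ricci curvature $\Ric^M(\xi,\xi)$ by $2G_{\mathcal{F}/M}$ and the term $4i\alpha\wedge\ol\alpha/\omega$ by $4i\Theta(N^{1,0}_M)/\omega$. First I would fix a point $p\in M$ and choose a distinguished parametrization $\varphi(\zeta,t)\colon V\to M$ in coordinates $(z_1=x_1+iy_1,z_2)$, so that at $p$ the frame $\{\partial/\partial x_1,\partial/\partial y_1\}$ spans $\sigma_T$, $\xi_p=(\partial/\partial x_2)_p$, $\nu_p=(\partial/\partial y_2)_p$, and $\omega = i(dz_1\wedge d\ol z_1 + dz_2\wedge d\ol z_2)$. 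Combining the defining equation $H(\sigma_T,\sigma_N) = g(R(v_1,Jv_1)Jv_2,v_2)$ (expanded via the two $J_X$-invariant planes as in \S\ref{sect:adjunction}) with Gauss' equation applied to the quadruples $(v_1,v_2,v_3,v_4)=(\partial/\partial x_1,\xi,\xi,\partial/\partial x_1)$ and $(\partial/\partial y_1,\xi,\xi,\partial/\partial y_1)$, the intrinsic curvature terms reassemble: the sum $g(R^M(\partial/\partial x_1,\xi)\xi,\partial/\partial x_1)+g(R^M(\partial/\partial y_1,\xi)\xi,\partial/\partial y_1)$ is precisely $\Ric^M(\xi,\xi)$, and that quantity does not appear in the present statement. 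So instead I will \emph{not} discard it; the point is that the left side of Proposition~\ref{takeuchi} is $H(\sigma_T,\sigma_N)$ minus $2G_{\mathcal{F}/M}$, which by \eqref{extrinsic-shape} equals $H(\sigma_T,\sigma_N) - 2\bigl(g(A\partial/\partial x_1,\partial/\partial x_1)g(A\partial/\partial y_1,\partial/\partial y_1) - g(A\partial/\partial x_1,\partial/\partial y_1)^2\bigr)$.

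The key algebraic step is to express this combination purely through the mixed shape-operator components $g(A\,\partial/\partial x_1,\xi)$ and $g(A\,\partial/\partial y_1,\xi)$. Here I would use two inputs: (i) Gauss' equation as above reduces $H(\sigma_T,\sigma_N)-\Ric^M(\xi,\xi)$ to $g(A\partial/\partial x_1,\xi)^2 + g(A\partial/\partial y_1,\xi)^2$, exactly as in the proof of Proposition~\ref{local} (the trace term $g(A\xi,\xi)\bigl(g(A\partial/\partial x_1,\partial/\partial x_1)+g(A\partial/\partial y_1,\partial/\partial y_1)\bigr)$ vanishes by minimality of the leaves); (ii) $\Ric^M(\xi,\xi)$ itself must be related to $G_{\mathcal{F}/M}$. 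For this second relation I would again invoke Gauss' equation, now inside $M$ viewed as a hypersurface of $X$, together with the fact that $X=\C\PP^2$ has bisectional curvature of a known form — or, staying at the level of a general K\"ahler surface, I would instead go through the Chern curvature directly. Indeed, the cleanest route is to recall the adjunction formula $K_X|M\otimes N^{1,0}_M = (T^{1,0}M)^*$ with the metric from $h_\varphi$ in \eqref{adjunction-metric}: differentiating $\log h_\varphi$ twice leafwise yields $i\Theta(N^{1,0}_M)(\partial/\partial\zeta,\partial/\partial\ol\zeta)$, and I would compute this Laplacian of $\log h_\varphi$ in the distinguished coordinate using \eqref{adjunction-metric}, expanding the second derivatives of the $g_{\,\bullet\bullet}$ components in terms of Christoffel symbols and curvature, exactly as was done for the first derivatives in the proof of Proposition~\ref{local}.

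Concretely, I expect the identity to fall out of a single coordinate computation: from \eqref{adjunction-metric}, $\log h_\varphi = \tfrac12\log\bigl(g_{x_2x_2} - (g_{x_1y_2}^2+g_{y_1y_2}^2)/g_{x_1x_1}\bigr)$ on $\C\times\{0\}\cap V$; applying $\partial^2/\partial\zeta\,\partial\ol\zeta = \tfrac14(\partial_{x_1}^2+\partial_{y_1}^2)$ at $p$ produces a first-derivative-squared piece — which by the computation in Proposition~\ref{local} is $16\,i\alpha\wedge\ol\alpha/\omega = 4(2-\Ric^M(\xi,\xi))$ up to the normalization constant — plus a second-derivative piece, and the second-derivative piece, after using the Kähler condition to convert $\partial^2 g_{\bullet\bullet}$ into curvature, should be exactly the combination that upgrades $\Ric^M(\xi,\xi)$ into $2G_{\mathcal F/M}$ by means of \eqref{extrinsic-shape}. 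The main obstacle will be bookkeeping: keeping track of which second derivatives of the metric coefficients survive at $p$ (many first derivatives of $g$ vanish once the coordinate is normalized, but second derivatives do not), correctly matching the normalization constants between $\omega$, $dV_\omega$, and the factor $4$ in the statement, and making sure the minimality-of-leaves cancellations are applied consistently. Once the bookkeeping is organized — ideally by writing everything in the orthonormal frame $\{L_1\}$ of $T^{1,0}M$ so that $i\Theta(N^{1,0}_M)/\omega$ literally equals the leafwise Laplacian of $\log h_\varphi$ — the equality should follow from direct computation with no further ideas needed, paralleling the structure of \S\ref{sect:adjunction}.
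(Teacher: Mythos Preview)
Your proposal is workable in principle but takes a more laborious route than the paper, and it contains some confusion along the way.

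The paper's proof does not pass through $\Ric^M(\xi,\xi)$ at all. Instead it rewrites \eqref{adjunction-metric} as the determinant identity
\[
g\!\left(\base{z_1},\base{z_1}\right) h_\varphi^{2} \;=\; \det\Big[g\!\left(\base{z_j},\base{z_k}\right)\Big]_{j,k},
\]
so that $-\pa_\zeta\opa_\zeta \log h_\varphi$ is, up to a factor $\tfrac12$, the difference $\Ric^{X}(\partial_{z_1},\partial_{\ol z_1}) - \Ric^{P}(\partial_{z_1},\partial_{\ol z_1})$, where $P$ is the leaf through $p$. The ambient piece is $\tfrac12\bigl(H(\sigma_T,\sigma_N)+H(\sigma_T,\sigma_T)\bigr)$; the leaf piece is the holomorphic sectional curvature of $P$. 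The identity then drops out of \emph{two} applications of Gauss' equation, once for $P\subset M$ (giving $-G_{\mathcal F/M}$ by definition) and once for $M\subset X$ (giving another $-G_{\mathcal F/M}$ via \eqref{extrinsic-shape}). No shape-operator bookkeeping in the $\xi$-direction and no reference to $\Ric^M(\xi,\xi)$ are needed.

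Your plan, by contrast, expands $\pa_\zeta\opa_\zeta \log h_\varphi$ directly from the square-root expression \eqref{adjunction-metric}, splitting into a first-derivative-squared piece (which you correctly identify with $i\alpha\wedge\ol\alpha/\omega$) and a second-derivative piece that you then have to match against curvature by hand. This can be pushed through, but it is considerably messier than the determinant/Ricci identification above, and the detour through $\Ric^M(\xi,\xi)$ and Proposition~\ref{local} is unnecessary. Two specific issues: first, your formula ``$4(2-\Ric^M(\xi,\xi))$'' tacitly sets $H(\sigma_T,\sigma_N)=2$, which is the $\C\PP^2$ value, whereas the proposition is stated for a general K\"ahler surface; second, you only invoke Gauss' equation for $M\subset X$, but to close the argument cleanly you also need it for the leaf $P\subset M$ --- that second application is precisely what produces the factor $2$ in front of $G_{\mathcal F/M}$.
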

\begin{proof}
We work in the same local situation of Proposition \ref{local}.
We denote by $P = \C \times \{ 0 \} \cap V$ the plaque of the leaf passing through $p$ and
by $\Ric^P$ its Ricci tensor with respect to the restriction of $g$ on the leaf. 

Since (\ref{adjunction-metric}) is equivalent to
\[
g\left(\base{z_1}, \base{z_1}\right) h^2_\varphi =  \det \left[ g\left(\base{z_j}, \base{z_k}\right) \right]_{j, k = 1, 2}  , 
\]
we have at $p$
\begin{align*}
i\Theta(N^{1,0}_M)/\omega 
&= \frac{\pa^2}{\pa z_1 \pa \ol{z_1}} \left(-\log h_\varphi \right) \\
&= \frac{1}{2} \left(\Ric(\base{z_1}, \base{\ol{z_1}}) - \Ric^P(\base{z_1}, \base{\ol{z_1}})\right)\\
&= \frac{1}{4} \left( H(\sigma_T, \sigma_N) + H(\sigma_T, \sigma_T) - g(R^P(\base{x_1}, \base{y_1})\base{y_1}, \base{x_1})\right).
\end{align*}
Repeated use of Gauss' equation and (\ref{extrinsic-shape}) yields
\begin{align*}
& H(\sigma_T, \sigma_T) - g(R^P(\base{x_1}, \base{y_1})\base{y_1}, \base{x_1}) \\
&= H(\sigma_T, \sigma_T) - g(R^M(\base{x_1}, \base{y_1})\base{y_1}, \base{x_1}) - G_{\mathcal{F}/M}\\
&= -2G_{\mathcal{F}/M}
\end{align*}
and this completes the proof. 
\end{proof}

\begin{Remark}
By exploiting the formula of Matsumoto \cite{M}, 
Ohsawa \cite{O} substantially derived this equality when $X = \C^2$ and $g$ is the Euclidean metric.
Here we quote the result of Ohsawa:\\

{\it
Let $A$ be a non-singular complex curve in $\C^2$ and denote by $\delta_A$ the Euclidean distance to $A$. 
Take $z_0 \in A$ and a real normal line $\nu$ of $A$ at $z_0$. 
Consider smooth level sets $M_\varepsilon := \delta_A^{-1}(\varepsilon)$ for $0 < \varepsilon \ll 1$.
Then, $i\pa\opa \log \delta_A(z)$ evaluated on $T^{1,0}_z M_{\delta_A(z)}$
tends to the Lipschitz--Killing curvature of $A$ at $z_0$ in the direction of $\nu$
as $z \to z_0$ through the normal line $\nu$.
}

\end{Remark}

We conclude this paper with a remark on a general restriction on the totally real Ricci curvature 
of Levi-flat real hypersurfaces in K\"ahler surfaces. 
In the first part of the proof of the theorem of Bejancu and Deshmukh, 
the following rigidity result is used.

\begin{Proposition}[{\cite[Remark in p.272]{BD}}]
Let $M$ be an oriented $\mathcal{C}^\infty$-smooth compact Levi-flat real hypersurface without boundary
in a K\"ahler manifold of dimension $\geq 2$.
Suppose that the totally real Ricci curvature $\Ric^M(\xi, \xi) \geq 0$ is non-negative along $M$. 
Then $\Ric^M(\xi, \xi) = 0$ everywhere on $M$. 
\end{Proposition}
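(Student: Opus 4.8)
The plan is to derive the global identity
\[
\int_M \Ric^M(\xi,\xi)\, dv_M \;=\; -\int_M |\mathrm{II}|^2\, dv_M \;\le\; 0,
\]
where $\mathrm{II}$ denotes the scalar second fundamental form of the Levi foliation $\mathcal F$ inside $M$, namely $\mathrm{II}(v,w) = g(\nabla^M_v w,\xi)$ for $v,w\in\sigma_T$. Since the integrand on the left is $\ge 0$ by hypothesis, this forces $\Ric^M(\xi,\xi)\equiv 0$ on $M$ (and, incidentally, $\mathrm{II}\equiv 0$, i.e. the leaves are totally geodesic in $M$). The identity will be obtained from the Bochner--Weitzenb\"ock formula for the $1$-form $\theta := \xi^\flat$ on the closed Riemannian manifold $M$:
\[
\int_M \Ric^M(\xi,\xi)\, dv_M \;=\; \int_M\big(|d\theta|^2 + |\delta\theta|^2 - |\nabla^M\theta|^2\big)\, dv_M .
\]

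Two structural facts make the right-hand side computable. First, $\ker\theta = \sigma_T = TM\cap J_X TM$ is integrable since $M$ is Levi-flat, so by Frobenius $d\theta = \theta\wedge\beta$ for a $1$-form $\beta$, normalized by $\beta(\xi)=0$; in particular $d\theta$ vanishes on $\sigma_T\times\sigma_T$. (In the spirit of the form $\alpha$ of \S\ref{subsect:alpha}, the $1$-form $\beta$ records the infinitesimal holonomy of $\mathcal F$.) Second, the leaves of $\mathcal F$ are complex hypersurfaces of the K\"ahler manifold $X$, hence minimal; as $\xi$ is a unit normal direction to the leaves within $M$ and $\xi\perp\nu$, the $\xi$-component of the mean curvature vector is unchanged whether computed with $\nabla$ or $\nabla^M$, and therefore vanishes. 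This gives $\operatorname{div}_M\xi = 0$, i.e. $\delta\theta = 0$.

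I would then split $TM = \sigma_T\oplus\R\xi$ orthogonally and read off the symmetric and skew parts $S,A$ of the $(0,2)$-tensor $\nabla^M\theta$ block by block. On $\sigma_T\times\sigma_T$ one has $A = \tfrac12\, d\theta = 0$ and $S = -\mathrm{II}$ (this is where integrability of $\sigma_T$ enters, guaranteeing symmetry of $\mathrm{II}$), with $\operatorname{tr}_{\sigma_T} S = 0$ by the minimality observation; on the mixed block, using $\nabla^M_v\xi\perp\xi$ and the identity $d\theta(\xi,v) = g(\nabla^M_\xi\xi,v) = \beta(v)$ for $v\in\sigma_T$, one finds $S(\xi,v) = A(\xi,v) = \tfrac12\beta(v)$ and $S(\xi,\xi)=A(\xi,\xi)=0$. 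Hence $|S|^2 = |\mathrm{II}|^2 + \tfrac12|\beta|^2$ and $|A|^2 = \tfrac12|\beta|^2$, so the $\beta$-terms cancel and
\[
|d\theta|^2 - |\nabla^M\theta|^2 \;=\; -\operatorname{tr}\!\big((\nabla^M\theta)^2\big) \;=\; |A|^2-|S|^2 \;=\; -|\mathrm{II}|^2 .
\]
Feeding this together with $\delta\theta = 0$ into the Weitzenb\"ock formula gives the asserted integral identity, and the Proposition follows.

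The one delicate point I anticipate is the bookkeeping in the last paragraph: verifying that the two $\beta$-contributions to $|S|^2$ and $|A|^2$ coincide and hence drop out of $|S|^2-|A|^2$, and keeping the Hodge-versus-rough-Laplacian sign conventions straight throughout --- which is best handled by running the whole argument through the manifestly nonnegative identity $\Delta_{\mathrm{Hodge}} = \nabla^{M*}\nabla^M + \Ric^M$. The remaining inputs, Frobenius integrability of the Levi distribution and minimality of complex submanifolds of a K\"ahler manifold, are classical.
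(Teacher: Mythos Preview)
Your argument is correct. Note, however, that the paper does not supply its own proof of this Proposition: the result is quoted from \cite[Remark in p.272]{BD}, and your Bochner--Weitzenb\"ock computation (integrating the identity $\Delta_{\mathrm{Hodge}}\theta = \nabla^{M*}\nabla^M\theta + \Ric^M$ against $\theta = \xi^\flat$ and exploiting $\delta\theta = 0$ from minimality of the leaves together with $d\theta|_{\sigma_T\times\sigma_T}=0$ from Levi-flatness) is essentially the approach of Bejancu and Deshmukh.

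What the paper does instead is observe that a \emph{weaker} statement --- assuming additionally that $M$ bounds a relatively compact domain in a K\"ahler surface, and concluding only that $\Ric^M(\xi,\xi)$ cannot be everywhere \emph{strictly} positive --- follows from its adjunction-type equalities (Propositions~\ref{local} and~\ref{takeuchi}) combined with the global bound on the Diederich--Forn{\ae}ss index from \cite{AB,FS}: under $\Ric^M(\xi,\xi)>0$ one obtains $i\Theta(N^{1,0}_M) > i\alpha\wedge\ol{\alpha}$ on $T^{1,0}M$, which by \cite{A} forces a Diederich--Forn{\ae}ss exponent $> 1/2$, contradicting those global results. Your route is both stronger (it gives the full rigidity, and incidentally that the leaves are totally geodesic in $M$) and more self-contained (only intrinsic Riemannian geometry of $M$ plus minimality of complex submanifolds); the paper's detour, by contrast, illustrates how complex-analytic invariants of the ambient domain constrain the boundary geometry, which is the theme of the appendix.
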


We would now like to point out that a weaker version of this proposition follows from our adjunction-type equalities, 
Proposition \ref{local} and \ref{takeuchi}, and recent studies on the Diederich--Fornaess index \cite{AB}, \cite{FS}.

\begin{Corollary}
Let $M$ be an oriented $\mathcal{C}^3$-smooth closed Levi-flat real hypersurface in a K\"ahler surface $(X, J_X, g)$.
Suppose that $M$ is a boundary of a relatively compact domain $\Omega \Subset X$. 
Then, the totally real Ricci curvature $\Ric^M(\xi, \xi)$ cannot be $> 0$ everywhere on $M$. 
\end{Corollary}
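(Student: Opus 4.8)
The plan is to deduce this corollary from the adjunction-type equality of Proposition \ref{takeuchi}, the Takeuchi-type inequality \eqref{takeuchi-on-boundary}, and the integral-formula machinery of \S\ref{sect:integral}, combined with the fact that $M = \pa\Omega$ is a Levi-flat boundary (hence $\ell_\rho \equiv 0$, so Proposition \ref{sect} applies, and simultaneously $\Omega$ is pseudoconvex). The key point is that $G_{\mathcal{F}/M} \leq 0$ on all of $M$, so Proposition \ref{takeuchi} gives, as quadratic forms on $T^{1,0}M$,
\[
\Ric^M(\xi,\xi) = H(\sigma_T,\sigma_N) - 2G_{\mathcal{F}/M} - 4\,i\Theta(N^{1,0}_M)/\omega + 2G_{\mathcal{F}/M} \cdot (\text{something}),
\]
so more usefully one rearranges Proposition \ref{local} and Proposition \ref{takeuchi} together to get $\Ric^M(\xi,\xi) + 4i\alpha\wedge\ol\alpha/\omega = H(\sigma_T,\sigma_N)$ and $2G_{\mathcal{F}/M} + 4 i\Theta(N^{1,0}_M)/\omega = H(\sigma_T,\sigma_N)$. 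Subtracting, $\Ric^M(\xi,\xi) = 2G_{\mathcal{F}/M} + 4i\Theta(N^{1,0}_M)/\omega - 4i\alpha\wedge\ol\alpha/\omega$; but $-4i\pa\opa\omega_\rho/\omega$ on $M$ equals (from the computation in the proof of the Proposition in \S\ref{sect:finiteness}, using Lemma \ref{alpha}) a multiple of $i\alpha\wedge\ol\alpha$, and $i\Theta(N^{1,0}_M)$ is exactly $-i\pa\opa\log h_\varphi$, tying everything back to $\omega_\rho$.

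First I would set up the signed boundary distance function $\delta$ (legitimate since $X$ is K\"ahler and $M$ is $\mathcal{C}^3$, so $\delta$ is $\mathcal{C}^3$ near $M$), induce $h^2$ on $N^{1,0}_M$, and assume for contradiction that $\Ric^M(\xi,\xi) > 0$ everywhere. By compactness of $M$ there is $c > 0$ with $\Ric^M(\xi,\xi) \geq c$. Using Proposition \ref{local}, $H(\sigma_T,\sigma_N) - \Ric^M(\xi,\xi) = 4i\alpha\wedge\ol\alpha/\omega \geq 0$, so the strict positivity of $\Ric^M(\xi,\xi)$ forces a genuine positive lower bound that I can feed into Proposition \ref{sect}. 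Concretely, I would show that the curvature term $-i\Theta(L)\wedge\omega_\rho - i\pa\opa\omega_\rho$ appearing in Proposition \ref{sect}, for the appropriate line bundle $L$ on $X$, is $\geq \varepsilon\, dV_\omega$ near $M$ precisely because $\Ric^M(\xi,\xi) > 0$ — this is the K\"ahler-surface analogue of the computation carried out for $\C\PP^2$ in \S\ref{sect:finiteness}, only now there is no ambient negative line bundle to fight against, so the bound comes entirely from $\Ric^M(\xi,\xi) > 0$ via Proposition \ref{takeuchi} and the sign $G_{\mathcal{F}/M} \leq 0$. Then the argument of \S\ref{sect:finiteness} gives finite-dimensionality of $L^2_{0,0}(\Omega, L)\cap\Ker\opa$ for the relevant $L$.

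The contradiction then comes from Proposition \ref{separate} (or its proof): the domain $\Omega$ is pseudoconvex (Levi-flat boundary is in particular pseudoconvex from inside), so $\Omega$ carries a bounded plurisubharmonic exhaustion-type function that is strictly plurisubharmonic on an open set — this is exactly the Ohsawa--Sibony/Takeuchi input used in the proof of Proposition \ref{separate}, which works for pseudoconvex domains in any K\"ahler manifold admitting a complete K\"ahler metric, not just $\C\PP^n$. Hence the space of $L^2$ holomorphic sections of the relevant twisted canonical bundle is infinite-dimensional, contradicting finite-dimensionality. I would then conclude $\Ric^M(\xi,\xi)$ cannot be $> 0$ everywhere.

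\textbf{Main obstacle.} The delicate step is identifying the correct line bundle $L$ and verifying the positivity $-i\Theta(L)\wedge\omega_\rho - i\pa\opa\omega_\rho \geq \varepsilon\,dV_\omega$ near $M$ in the general K\"ahler-surface setting: in $\C\PP^2$ one used $\mathcal{O}(-3) = K_{\C\PP^2}$ and the explicit Fubini--Study curvature, whereas here one must instead play off $-i\pa\opa\omega_\rho|_M = -2i\alpha\wedge\ol\alpha\wedge\omega_\rho$ (Lemma \ref{alpha}) against the curvature of $N^{1,0}_M$, using that $4i\Theta(N^{1,0}_M)/\omega = H(\sigma_T,\sigma_N) - 2G_{\mathcal{F}/M} \geq H(\sigma_T,\sigma_N) > \Ric^M(\xi,\xi) - 4i\alpha\wedge\ol\alpha/\omega$ when $G_{\mathcal{F}/M}\leq 0$; combined with $\Ric^M(\xi,\xi) > 0$ this should yield the needed gap, but one must track the adjunction $K_X|M\otimes N^{1,0}_M = (T^{1,0}M)^*$ carefully to see which bundle's $L^2$ sections to count and to make sure that $\Omega$'s pseudoconvexity actually delivers the infinite-dimensionality. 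A secondary subtlety is that Proposition \ref{separate} is stated only for $\C\PP^n$; one must either invoke the general Hörmander/Demailly statement quoted in \S\ref{sect:infiniteness} directly, or check that the completeness-of-metric hypothesis is met for a relatively compact pseudoconvex $\Omega$ in a general K\"ahler surface (it is, since such $\Omega$ is Stein by Takeuchi-type results once a strictly psh exhaustion exists, or one uses a complete metric on $\Omega$ obtained by adding $-\log(-\psi)$-type terms).
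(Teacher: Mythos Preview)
Your first step is correct and matches the paper: combining Proposition~\ref{local} and Proposition~\ref{takeuchi} with $G_{\mathcal{F}/M}\leq 0$ and the hypothesis $\Ric^M(\xi,\xi)>0$ gives
\[
4\,i\Theta(N^{1,0}_M)/\omega \;=\; H(\sigma_T,\sigma_N)-2G_{\mathcal{F}/M}\;\geq\; H(\sigma_T,\sigma_N)\;=\;4\,i\alpha\wedge\ol\alpha/\omega+\Ric^M(\xi,\xi)\;>\;4\,i\alpha\wedge\ol\alpha/\omega,
\]
so $i\Theta(N^{1,0}_M)>i\alpha\wedge\ol\alpha$ on $T^{1,0}M$. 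From this point on, however, the paper does \emph{not} feed this back into the $L^2$ machinery of \S\ref{sect:integral}--\S\ref{sect:infiniteness}. Instead it invokes \cite[Theorem~1.1]{A}: the inequality $i\Theta(N^{1,0}_M)>i\alpha\wedge\ol\alpha$ forces the Diederich--Fornaess exponent of the signed distance $\rho$ to exceed $1/2$ (i.e.\ $-|\rho|^{1/2}$ is strictly plurisubharmonic near $M$), and this contradicts the global upper bound on the Diederich--Fornaess index for relatively compact domains with Levi-flat boundary proved in \cite{AB} and \cite{FS}. No line bundle, no $L^2$ cohomology, no Stein-ness of $\Omega$ is used.

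Your proposed route has a genuine gap at exactly the place you flag as the ``main obstacle.'' To run Proposition~\ref{sect} you need a holomorphic line bundle $L$ defined on $X$ (or at least on a neighbourhood of $\ol\Omega$) whose curvature, restricted to $T^{1,0}M$, dominates $2\,i\alpha\wedge\ol\alpha$. In $\C\PP^2$ the bundles $\mathcal{O}(-m)$ provide this for free; in a general K\"ahler surface there is no such ambient bundle available, and $N^{1,0}_M$ lives only on $M$, so the inequality $i\Theta(N^{1,0}_M)>i\alpha\wedge\ol\alpha$ cannot be packaged as $-i\Theta(L)\wedge\omega_\rho\geq\varepsilon\,dV_\omega$ for any $L$ on $X$. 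The adjunction formula $K_X|M\otimes N^{1,0}_M=(T^{1,0}M)^*$ does not help, because the curvatures of $K_X$ and $(T^{1,0}M)^*$ are governed by the ambient Ricci and holomorphic sectional curvatures, on which no hypothesis is made.

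The infinite-dimensionality side is equally problematic. You assert that $\Omega$ is Stein ``by Takeuchi-type results,'' but Takeuchi's theorem requires positive holomorphic bisectional curvature (this is how \cite{T} and \cite{OS} produce the bounded strictly plurisubharmonic function); a relatively compact pseudoconvex domain in an arbitrary K\"ahler surface need not be Stein, need not carry a complete K\"ahler metric, and need not admit a bounded plurisubharmonic function that is strictly plurisubharmonic anywhere. So neither the general H\"ormander--Demailly statement quoted in \S\ref{sect:infiniteness} nor Proposition~\ref{separate} is available. The Diederich--Fornaess route in the paper bypasses all of this: the contradiction in \cite{AB}, \cite{FS} is obtained by a direct integration-by-parts on $M$ and does not require $\Omega$ to be Stein.
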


\begin{proof}[Sketch of the proof]
Suppose $\Ric^M(\xi, \xi) > 0$ on $M$. Proposition \ref{local} and \ref{takeuchi} yield
\begin{align*}
4i\Theta(N^{1,0}_M)/\omega  &= H(\sigma_T, \sigma_N) - 2G_{\mathcal{F}/M}\\
& \geq H(\sigma_T, \sigma_N)\\
&=  4i\alpha\wedge\ol{\alpha}/\omega + \Ric^M(\xi, \xi) \\
& > 4i\alpha\wedge\ol{\alpha}/\omega.
\end{align*}
We therefore have an inequality
$ i\Theta(N^{1,0}_M) > i\alpha\wedge\ol{\alpha}$
as quadratic forms on $T^{1,0}M$.

Following \cite[Theorem 1.1]{A}, this inequality implies that the Diederich--Fornaess exponent in a weak sense of 
the signed boundary distance function $\rho$ to $M$ with respect to $g$ is greater than $1/2$, 
namely, $-\sqrt{|\rho|}$ is strictly plurisubharmonic in $\Omega$ except a compact subset. 
This contradicts the global restriction on the Diederich--Fornaess index of relatively compact domains
with Levi-flat boundary proved in \cite{AB} and \cite{FS}. 

More precisely, since our domain $\Omega$ need not to be Stein, the global restriction stated in these papers cannot be applied literally,
however, the argument in \cite{FS} still works in the current setting. We leave its detail to the reader. 
\end{proof}

\end{document}